\documentclass[12pt, reqno]{amsart}
\usepackage{pifont}
\usepackage{mathrsfs}
\usepackage{geometry,color}
\usepackage{titletoc}
\usepackage{stix2}

\usepackage{amsmath}
\usepackage{amssymb} 
\usepackage{enumitem} 
\usepackage{mathtools} 
\usepackage[table]{xcolor} 
\usepackage[all]{xy} 
\usepackage{tikz} 
\usepackage{tikz-cd}
\usepackage{indentfirst} 
\usepackage{babel} 
\usepackage{setspace} 

\usepackage[colorlinks,linkcolor=red,anchorcolor=green,citecolor=blue]{hyperref} 
\hypersetup{linktocpage = true} 

\usepackage{rotating} 

\usepackage{ytableau} 
\usepackage{longtable} 
\newcolumntype{M}[1]{>{\centering\arraybackslash}m{#1}} 

\usepackage{fancyhdr}

\newcommand{\Rmnum}[1]{\expandafter\@slowromancap\romannumeral #1@}

\theoremstyle{plain}
\newtheorem{thm}{Theorem}[section]
\newtheorem{lem}[thm]{Lemma}
\newtheorem{prop}[thm]{Proposition}
\newtheorem{cor}[thm]{Corollary}
\newtheorem{defn}[thm]{Definition}

\newtheorem{conjecture}[thm]{Conjecture}

\theoremstyle{definition}
\newtheorem{example}[thm]{Example}
\newtheorem{rem}[thm]{Remark}

\newcommand{\btheorem}{\begin{thm}}
    \newcommand{\etheorem}{\end{thm}}
\newcommand{\bproposition}{\begin{prop}}
    \newcommand{\eproposition}{\end{prop}}
\newcommand{\bdefinition}{\begin{defn}}
    \newcommand{\edefinition}{\end{defn}}
\newcommand{\bcorollary}{\begin{cor}}
    \newcommand{\ecorollary}{\end{cor}}
\newcommand{\bproof}{\begin{proof}}
    \newcommand{\eproof}{\end{proof}}
\newcommand{\bremark}{\begin{remark}}
    \newcommand{\eremark}{\end{remark}}
\newcommand{\eexample}{\end{example}}
\newcommand{\bexample}{\begin{example}}

\newcommand{\elemma}{\end{lemma}}
\newcommand{\blemma}{\begin{lemma}}

\renewcommand{\bar}{\overline}

\renewcommand{\phi}{\varphi}

\newcommand{\beq}{\begin{equation}}
\newcommand{\eeq}{\end{equation}}
\newcommand{\ee}{\end{eqnarray*}}
\newcommand{\be}{\begin{eqnarray*}}

\newcommand{\bd}{\begin{enumerate}}
    \newcommand{\ed}{\end{enumerate}}

\renewcommand{\hat}{\widehat}
\renewcommand{\tilde}{\widetilde}

\newcommand{\thmref}[1]{Theorem~\ref{#1}}

\usepackage{harmony}

\renewcommand{\#}{\sharp}

\newcommand{\tr}{\mathrm{tr}}
\newcommand{\norm}[1]{\left\lVert #1\right\rVert}

\setlist[itemize]{leftmargin=*}
\setlist[enumerate]{leftmargin=*}

\numberwithin{equation}{section} 

\makeatletter

\usepackage{fancyhdr}
\title{Diffusion and Reaction in the Quasi-Spherical Equation: Mean Curvature Deformations for Fill-Ins}

\author{Wenlong Wang}
\author{Guodong Wei}

\address{Wenlong Wang, School of Mathematical Sciences and LPMC, Nankai University, Tianjin, 300071, P. R. China.}
\email{wangwl@nankai.edu.cn}
\address{Guodong Wei, School of Mathematics (Zhuhai), Sun Yat-sen University, Zhuhai, Guangdong, 519082, P. R. China}
\email{weigd3@mail.sysu.edu.cn}

\begin{document}
\raggedbottom

\begin{abstract}  
We study the diffusion and reaction effects of Bartnik’s quasi-spherical equation to deform boundary mean curvature in fill-in problems with scalar curvature bounded below. The diffusion effect yields an explicit \(L^p\)-to-\(L^\infty\) estimate for \(\partial_tu=u^2\Delta u\), thereby extending the known upper bound for the minimum boundary mean curvature of fill-ins to a quantitative upper bound for its harmonic mean. For spin fill-ins, this bound is explicit and involves only coarse intrinsic boundary data. By introducing an absorbing reaction term, we also construct a deformation that transforms any nonnegative initial mean curvature into a terminal mean curvature with a uniform positive lower bound. For Gromov’s conjecture on total mean curvature, this reduces the \(H\geq 0\) case to Theorem A of Frenck, Hanke, and Hirsch \cite{FHH}, which assumes \(H\geq\kappa>0\). This covers the case of non-spin boundaries, complementing their result for spin boundaries (Theorem B) for this conjecture.  
\end{abstract}

\maketitle

\section{Introduction}
Scalar curvature plays an important role in both differential geometry and general relativity. Geometrically, it is one of the fundamental curvature invariants of a Riemannian manifold and has profound implications for the global geometry and topology of the underlying manifold. In general relativity, for a time-symmetric initial data set, the dominant energy condition reduces precisely to the nonnegativity of the scalar curvature. A cornerstone illustrating this connection is the positive mass theorem (see \cite{BHH, BW, Sch89,SY79a,SY17, Witten}), which asserts that a complete asymptotically flat manifold with nonnegative scalar curvature has nonnegative ADM mass, with equality if and only if the manifold is isometric to Euclidean space.

The study of scalar curvature geometry on manifolds with boundary is closely related to the theory of quasi-local mass (see \cite{Ba1,BY,Ha2, LY, WY}), which seeks to measure the gravitational energy contained within a bounded region in terms of the intrinsic geometry and mean curvature of its boundary. This naturally leads to the notion of Bartnik data, namely, a triple
\(
(\Sigma,\gamma,H)
\), where $(\Sigma,\gamma)$ is a closed oriented Riemannian manifold that is null-cobordant and $H$ is a prescribed function representing the boundary mean curvature.

A fundamental problem is to determine whether prescribed Bartnik data admit a fill-in with a given scalar curvature lower bound. More generally, one may consider deformations of Bartnik data and ask whether two such data sets can be joined by a Riemannian cobordism with a given scalar curvature lower bound. Such constructions provide a flexible approach to studying scalar curvature geometry. For example, such a cobordism played an essential role in the solution of Gromov's positive scalar curvature extension problem by Shi and the present authors \cite{SWW}. Such a cobordism was also used by Weinberger, Xie, and Yu \cite{WXY} to produce counterexamples to Gromov's compactness question for metrics of uniformly positive scalar curvature on noncompact manifolds.

In this paper, we study deformations of Bartnik data together with their applications to fill-in problems. For these applications, we restrict attention to Bartnik data on the same underlying manifold and to cobordisms of the form 
\(\Sigma\times[0,1]\), which we call necks. We therefore adopt the following restricted notion of cobordism. Throughout the paper, unless otherwise stated, boundary mean curvature is computed with respect to the inward unit normal; in particular, the boundary of a Euclidean ball has positive mean curvature.
\begin{defn}
Let \(\Sigma\) be a closed manifold, let \(\gamma_0\) and \(\gamma_1\) be smooth Riemannian metrics on \(\Sigma\), and let \(H_0\), \(H_1\in C^\infty(\Sigma)\). A smooth Riemannian metric \(g\) on \(\Sigma\times[0,1]\) is called a cobordism from \((\gamma_0,H_0)\) to \((\gamma_1,H_1)\) if
\[
g|_{\Sigma\times\{0\}}=\gamma_0,\qquad
g|_{\Sigma\times\{1\}}=\gamma_1,
\]
and
\[
H_g|_{\Sigma\times\{0\}}=H_0,\qquad
H_g|_{\Sigma\times\{1\}}=H_1.
\]
\end{defn}
We first study whether mean curvature that is large in an integral sense can be deformed to become large pointwise, and obtain the following result.

\begin{thm}\label{cobordant1}
Let $(\Sigma^n,\gamma)$ be a closed Riemannian manifold. For any $p>0$ and $\kappa>0$, there exists a constant
$C=C(n,p,\kappa,C_S,V)>0$ such that if a smooth positive function $H$ on $\Sigma$ satisfies
\[
\norm{H}_{L^{-p}}
=
\left(\int_\Sigma H^{-p}\,d\mu_\gamma\right)^{-\frac1p}
\geq C,
\]
then there exist a smooth function $\widetilde H\geq\kappa$ on $\Sigma$
and a cobordism $g$ from $(\gamma,-H)$ to
$\left(2^{4/(n+1)}\gamma,\widetilde H\right)$ such that
\[
R_g\geq
\min\left\{
\min_\Sigma R_\gamma,\,
2^{-\frac{4}{n+1}}\min_\Sigma R_\gamma
\right\}.
\]
Here, $R_g$ denotes the scalar curvature of $g$, and $C_S$ and $V$ denote the Sobolev constant and the volume of $(\Sigma,\gamma)$, respectively. The choice of $C$ is described in the proof.
\end{thm}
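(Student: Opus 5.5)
The plan is to build $g$ as a quasi-spherical metric
\[
g=u^2\,dr^2+r^2\gamma \quad\text{on } \Sigma\times[1,r_1],\qquad r_1=2^{\frac{2}{n+1}},
\]
so that the slice at $r=r_1$ carries the induced metric $r_1^2\gamma=2^{4/(n+1)}\gamma$. (Reparametrizing $r$ affinely gives $\Sigma\times[0,1]$.) The leaves $\Sigma\times\{r\}$ have mean curvature $n/(ru)$ in the $\partial_r$ direction. Since the inward normal is $+\partial_r$ at $r=1$ and $-\partial_r$ at $r=r_1$, the boundary conditions read as follows:
\[
H_g|_{r=1}=-\frac{n}{u(\cdot,1)},\qquad H_g|_{r=r_1}=\frac{n}{r_1u(\cdot,r_1)}.
\]
So we impose $u(\cdot,1)=n/H$, and we will need $\max_\Sigma u(\cdot,r_1)\le n/(r_1\kappa)$. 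We then set $\widetilde H=n/(r_1u(\cdot,r_1))$.

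Next I compute the scalar curvature of $g$ (the standard Bartnik computation):
\[
R_g=\frac{R_\gamma}{r^2}+\frac{n(n-1)}{r^2}\Big(\frac1{u^2}-1\Big)+\frac{2n\,\partial_ru}{r\,u^3}-\frac{2\Delta_\gamma u}{r^2u}.
\]
I choose the pure diffusion equation $n r\,\partial_r u=u^2\Delta_\gamma u$. In the time $t=\tfrac1n\log r$ this becomes $\partial_t u=u^2\Delta u$, and it cancels the last two terms. We are left with
\[
R_g=r^{-2}\big(R_\gamma+n(n-1)(u^{-2}-1)\big),
\]
which is $\ge r^{-2}R_\gamma\ge\min\{\min R_\gamma,\,r_1^{-2}\min R_\gamma\}$ wherever $u\le1$. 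Existence and smoothness of a positive solution on $[0,T]$, with $T=\tfrac1n\log r_1$, follow from parabolic theory once $u$ is bounded above and below. A positive lower bound comes from the comparison principle with constants.

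The analytic heart is an explicit $L^p\to L^\infty$ smoothing estimate for $\partial_tu=u^2\Delta u$. I would run a Moser iteration. Testing the equation against $u^{q}$ gives
\[
\frac{d}{dt}\int u^{q-1}\,d\mu=-(q-1)(q+1)\int u^{q}|\nabla u|^2\,d\mu\cdot\frac{1}{q+1}\cdots,
\]
so every $L^{s}$ norm with $s>0$ is nonincreasing and the dissipation controls $\int|\nabla u^{(q+2)/2}|^2$. Combining this with the Sobolev inequality (constant $C_S$, volume $V$) and iterating over a geometric sequence of exponents and times in $[0,T]$ should produce
\[
\sup_\Sigma u(\cdot,T)\le C'(n,p,C_S,V,T)\,\|u_0\|_{L^p}^{\theta}
\]
for some $\theta>0$. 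Since $\|u_0\|_{L^p}=n\|H\|_{L^{-p}}^{-1}$, taking $C$ large forces $u(\cdot,r_1)\le\min\{1,n/(r_1\kappa)\}$.

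The main obstacle is that $u_0=n/H$ need not be $\le1$ pointwise; only its $L^p$ norm is small. So at early times the term $n(n-1)(u^{-2}-1)$ may be negative. To handle this, I would use the smoothing estimate at every time $t>0$, not just at $T$. That gives a bound of the form $\sup u(\cdot,t)\lesssim t^{-\alpha}\|u_0\|_{L^p}^{\theta}$, so the bad region is confined to a thin initial layer $t\in[0,\delta]$ with $\delta\to0$ as $C\to\infty$. On that layer I would modify the construction. The first option is to replace $r^2\gamma$ by $\phi(r)^2\gamma$, with $\phi$ nearly constant on the layer, so that the $\phi'^2$-term multiplying $(u^{-2}-1)$ becomes negligible. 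The second option, as a fallback, is to add an absorbing reaction term tuned so that $R_g\ge r^{-2}R_\gamma$ holds exactly where $u>1$. Making this layer argument quantitative, with $C$ depending only on $(n,p,\kappa,C_S,V)$, is the step I expect to require the most care.
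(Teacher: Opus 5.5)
\textbf{There is a genuine gap in the geometric step.} Your analytic plan agrees with the paper: a quasi-spherical metric over a homothetic path, reduction to $\partial_s u=u^2\Delta_\gamma u$, and a Moser $L^p\to L^\infty$ smoothing estimate applied with $\|u_0\|_{L^p}=n\|H\|_{L^{-p}}^{-1}$.

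The problem is that your scalar curvature formula is wrong. For $\bar g=dr^2+r^2\gamma$ one has $R_{\bar g}=r^{-2}\big(R_\gamma-n(n-1)\big)$, and Lemma~\ref{quassphericalscalar} then gives
\[
R_g=\frac{R_\gamma-n(n-1)u^{-2}}{r^2}+\frac{2n\,\partial_ru}{r\,u^3}-\frac{2\Delta_\gamma u}{r^2u}.
\]
This is not $r^{-2}\big(R_\gamma+n(n-1)(u^{-2}-1)\big)$. As a sanity check, take $\gamma$ round and $u\equiv c$. Then $g$ is a cone over a round sphere of radius $1/c$, and its scalar curvature is $n(n-1)r^{-2}(1-c^{-2})$, not the positive value $n(n-1)r^{-2}c^{-2}$ your formula gives.

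Under your pure diffusion equation you therefore get $R_g=r^{-2}\big(R_\gamma-n(n-1)u^{-2}\big)$. This lies strictly below $r^{-2}R_\gamma$ everywhere and tends to $-\infty$ as $u\to0$. But $\widetilde H\ge\kappa$ forces $u(\cdot,r_1)\le n/(r_1\kappa)$ to be small. So the required bound fails at the terminal end, for example $R_g\le r_1^{-2}R_\gamma-\tfrac{n-1}{n}\kappa^2$ at $r=r_1$, and not in an initial layer. Large $u$ only brings $R_g$ closer to $r^{-2}R_\gamma$; the damage comes from small $u$.

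Neither proposed fix helps. Flattening $\phi$ near $r=1$ does nothing at late times. An absorbing term $-\lambda u^3$ corresponds to prescribing $\varphi=R_{\gamma_t}-2\lambda$ (compare the proof of Theorem~\ref{cobordant2}), which lowers the scalar curvature.

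\textbf{The missing idea: do not discard the linear term in \eqref{quassphericalueq}.} Prescribing $\varphi=R_{\gamma_t}$ kills the cubic term but leaves $\tfrac12(R_{\gamma_t}-R_{\bar g})u$. For $f(r)=r$ this gives
\[
nr\,\partial_ru=u^2\Delta_\gamma u+\tfrac{n(n-1)}{2}u,
\]
and this equation yields $R_g=r^{-2}R_\gamma$ exactly, whatever the size of $u$.

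The paper removes the linear term by choosing the path $f(t)=(1+t)^{2/(n+1)}$. This solves $\tfrac{n-1}{2}f'^2+ff''=0$, hence $R_{\bar g}=R_{\gamma_t}$. The equation then becomes exactly $nff'\partial_tu=u^2\Delta_\gamma u$, with $R_g=f^{-2}R_\gamma$ identically. The reparametrization $s(t)$ turns it into $\partial_su=u^2\Delta_\gamma u$, and Theorem~\ref{nm} with $m=2$, $p_0=p$ finishes the argument.

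Equivalently, in your variables, substitute $u=r^{(n-1)/2}v$ to get $nr^{2-n}\partial_rv=v^2\Delta_\gamma v$. This is the same family of metrics after setting $r=(1+t)^{2/(n+1)}$, and the factor $r^{(n-1)/2}\le r_1^{(n-1)/2}$ is harmless. With this correction no initial-layer argument is needed.
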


If the prescribed scalar curvature is strictly less than that of the slice metric, our construction yields a uniform positive lower bound for the terminal mean curvature, independent of the initial mean curvature.
\begin{thm}\label{cobordant2}
Let $(\Sigma^n,\gamma)$ be a closed Riemannian manifold. For any constant $\delta<\min_{\Sigma} R_{\gamma}$ and any smooth positive function $H$ on $\Sigma$, there exist a smooth function $\widetilde H$ on $\Sigma$ and a cobordism $g$ from $(\gamma,-H)$ to $\left(2^{4/(n+1)}\gamma,\widetilde H\right)$ such that 
\[
\widetilde H\geq \left[
 \frac{n}{4(n-1)}
 \left(2^{\frac{2(n-1)}{n+1}}-1\right)
 \left(\min_{\Sigma} R_{\gamma}-\delta\right)\right]^{\frac12}
,\quad\,  \text{and} \quad\,
R_g\geq\min\left\{\delta,2^{-4/(n+1)}\delta\right\}.
 \] 
 Moreover,
\[
 \int_\Sigma H\,d\mu_\gamma
 \leq 2\int_\Sigma \widetilde H\,d\mu_{\gamma}.
\]
\end{thm}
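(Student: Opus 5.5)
The plan is to realize the cobordism as a Bartnik quasi-spherical metric
\[
g=u^2\,dr^2+r^2\gamma \qquad\text{on } \Sigma\times[1,r_1],\qquad r_1:=2^{2/(n+1)} ,
\]
and then rescale $r$ to $[0,1]$. With this choice the inner slice carries $\gamma$ and the outer slice carries $r_1^2\gamma=2^{4/(n+1)}\gamma$.

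For the slices $\Sigma_r$ we have $H_{\Sigma_r}=n/(ru)$ with respect to $\partial_r$. At $r=1$ the inward normal of the neck is $+\partial_r$ while the orientation convention makes the boundary mean curvature $-n/u$, so we prescribe $u(\cdot,1)=n/H$. The terminal mean curvature is then $\widetilde H=n/(r_1u(\cdot,r_1))$, computed with respect to $2^{4/(n+1)}\gamma$.

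The scalar curvature formula
\[
R_g=r^{-2}R_\gamma-\frac{n(n-1)}{r^2u^2}+\frac{2n\,u_r}{ru^3}-\frac{2}{u}\Delta_{r^2\gamma}u
\]
turns the prescription $R_g=\delta r^{-2}$ into the parabolic quasi-spherical equation
\[
\frac{n}{r}\,u_r=u^2\Delta_{r^2\gamma}u+\frac{u^3(\delta-R_\gamma)}{2r^2}+\frac{n(n-1)u}{2r^2}.
\]
Since $r\in[1,r_1]$, this prescription gives exactly $R_g\ge\min\{\delta,2^{-4/(n+1)}\delta\}$.

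Set $\varepsilon:=\min_\Sigma R_\gamma-\delta>0$. The reaction term satisfies $u^3(\delta-R_\gamma)/(2r^2)\le-\varepsilon u^3/(2r^2)$, and this is the absorbing term.

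\textbf{Step 1 (solvability).} Short-time existence of a smooth positive solution is standard for $\partial_tu=u^2\Delta u+\dots$ with positive initial data. To continue the solution up to $r=r_1$ we need two-sided bounds.

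For the lower bound, evaluate the equation at a spatial minimum, where $\Delta u\ge0$. This gives $u_r\ge -C u^3$ with $C$ depending on $\max R_\gamma$ and $\delta$. Hence $u^{-2}$ grows at most linearly in $r$, so $u$ stays positive on the compact interval.

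For the upper bound, evaluate at a spatial maximum and compare with an ODE. Together these bounds make the equation uniformly parabolic, and standard parabolic theory then gives existence on all of $[1,r_1]$.

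\textbf{Step 2 (uniform lower bound for $\widetilde H$).} At a spatial maximum,
\[
\frac{n}{r}u_r\le-\frac{\varepsilon u^3}{2r^2}+\frac{n(n-1)u}{2r^2}.
\]
Put $v=(\max u)^{-2}$. The maximum principle (in Hamilton's form for $\max u$) gives
\[
v'\ge-\frac{(n-1)v}{r}+\frac{\varepsilon}{nr},\qquad\text{that is}\qquad (r^{n-1}v)'\ge\frac{\varepsilon}{n}\,r^{n-2}.
\]
Integrating from $1$ and using only $v(1)\ge0$, with no dependence on $H$, we obtain
\[
v(r_1)\ge\frac{\varepsilon\,(r_1^{n-1}-1)}{n(n-1)\,r_1^{n-1}} .
\]
Therefore
\[
\widetilde H^2\ge\frac{n^2}{r_1^2}\,v(r_1)=\frac{n\,\varepsilon\,(r_1^{n-1}-1)}{(n-1)\,r_1^{n+1}}.
\]
With $r_1^{n+1}=4$ and $r_1^{n-1}=2^{2(n-1)/(n+1)}$, this is exactly the stated bound.

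\textbf{Step 3 (integral inequality).} Integrate over $(\Sigma,\gamma)$ and use $\int\Delta_{r^2\gamma}u\,d\mu_\gamma=0$:
\[
\frac{d}{dr}\int_\Sigma u^{-1}\,d\mu_\gamma=\int_\Sigma\frac{u(R_\gamma-\delta)}{2nr}\,d\mu_\gamma-\frac{n-1}{2r}\int_\Sigma u^{-1}\,d\mu_\gamma\ \ge\ -\frac{n-1}{2r}\int_\Sigma u^{-1}\,d\mu_\gamma .
\]
Hence $r^{(n-1)/2}\int_\Sigma u^{-1}\,d\mu_\gamma$ is nondecreasing in $r$. It follows that
\[
\int_\Sigma\widetilde H\,d\mu_\gamma=\frac{n}{r_1}\int_\Sigma u^{-1}(r_1)\,d\mu_\gamma\ \ge\ r_1^{-(n+1)/2}\int_\Sigma H\,d\mu_\gamma=\frac12\int_\Sigma H\,d\mu_\gamma .
\]

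\textbf{Main obstacle.} The delicate part is Step 1: guaranteeing that the solution exists smoothly all the way to $r_1$ with nothing beyond positivity of $H$. The key is that the absorbing term $-\varepsilon u^3$ kills any possible blow-up of $u$, so the upper bound comes for free from the ODE. The only remaining point is the finite-interval positivity estimate for $u$.
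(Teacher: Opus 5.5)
Your proof is correct and follows essentially the paper's argument: a quasi-spherical neck over the homothetic foliation ending at $2^{4/(n+1)}\gamma$, an absorbing cubic term obtained by prescribing scalar curvature below $R_{\gamma_t}$, an ODE comparison for $\max u$, and monotonicity of a weighted $\int_\Sigma u^{-1}\,d\mu_\gamma$. The only difference is the gauge. The paper writes the neck as $u^2dt^2+(1+t)^{4/(n+1)}\gamma$ with $R_g=f^{-2}\bigl(R_\gamma-(\min_\Sigma R_\gamma-\delta)\bigr)$, so that $R_{\bar g}=R_{\gamma_t}$ removes the linear term and a time change gives the autonomous equation $\partial_s u=u^2\Delta_\gamma u-\lambda u^3$. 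You instead work in the cone gauge $u^2dr^2+r^2\gamma$ with $R_g=\delta r^{-2}$, and absorb the linear term $\frac{n(n-1)u}{2r^2}$ through the integrating factors $r^{n-1}$ and $r^{(n-1)/2}$. Under $r=(1+t)^{2/(n+1)}$ these are the same family of quasi-spherical metrics, with the paper's lapse equal to $\frac{2}{n+1}r^{-(n-1)/2}$ times yours, which is why your constants coincide exactly with the paper's.
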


The cobordisms required in Theorems \ref{cobordant1} and \ref{cobordant2} are constructed using the quasi-spherical metrics introduced by Bartnik \cite{Bartnik1}. In this construction, prescribing the scalar curvature leads to the following equation (see Section \ref{3} for details): 
\begin{equation}\label{qseq}
\bar H_t\frac{\partial u}{\partial t}=u^2\Delta_{\gamma_t}u+\frac{1}{2}\left(\varphi-R_{\gamma_t}\right)u^3+\frac{1}{2}\left(R_{\gamma_t}-R_{\bar g}\right)u.
\end{equation}
Here, \(\varphi\) denotes the prescribed scalar curvature, and \(u\) is the ratio of the mean curvature of a slice with respect to the background metric to that with respect to the constructed metric.

The proof of Theorem \ref{cobordant1} relies on the {\it diffusion effect} of equation \eqref{qseq}, which is captured by the model equation
\begin{equation}\label{maineq}
u_t=u^2\Delta u.
\end{equation}
This falls into a broad class of equations
\begin{equation}\label{fde}
    \partial_t u=u^m\Delta u,
\end{equation}
which arise in a variety of physical models and applications, including groundwater flow through fissured porous media, biological diffusion processes, and heat distribution in solid crystalline molecular hydrogen. We refer the reader to \cite{BBCP, BV, BH, BF, JX, Rosen, U, V} and the references therein for further discussions of these applications.

Equation \eqref{fde} is also closely related to several classical diffusion equations and geometric evolution problems. If $u$ is positive and $m\neq1$, then the transformation
\(v=u^{1-m}\)
yields
\begin{equation}\label{817}
    \partial_t v
    =(1-m)\Delta\big(v^{\frac{1}{1-m}}\big).
\end{equation}
When $0<m<1$, \eqref{817} is the porous-medium equation. When $m>1$, this equation is a generalized filtration equation. When \(m=2\), \eqref{817} corresponds to the \(J\)-flow in complex dimension one, arising from the moment-map framework introduced by Donaldson \cite{D}. When \(n\geq 3\) and \(m=-4/(n-2)\), \eqref{fde} captures the leading-order part of the Yamabe flow equation for the conformal factor $u$. In the formal limit \(m\to-\infty\), \eqref{817} becomes the logarithmic diffusion equation
\(\partial_t v=\Delta\log v,\)
which describes the two-dimensional Ricci flow in local isothermal coordinates.

The key analytic ingredient in the proof of Theorem \ref{cobordant1} is the following $L^p$-to-$L^\infty$ estimate for positive solutions of \eqref{fde}.
\begin{thm}\label{nm}
Let \((\Sigma^n,\gamma)\) be a closed Riemannian manifold with \(n\geq2\), and let \(C_S\) and \(V\) denote its Sobolev constant and volume, respectively. For \(m\geq0\), suppose that \(u\) is a smooth positive solution of
\[
\begin{cases}
\partial_t u=u^m\Delta_\gamma u
& \text{on }\,\Sigma\times[0,T],\\
u(\cdot,0)=u_0>0.
\end{cases}
\]
When $n\geq3$, for every
$p_0>\max\{0,1-m\}$ and $t\in(0,T]$,
\[
\begin{aligned}
\|u(\cdot,t)\|_{L^\infty}
\leq{}&
C(n,m,p_0,C_S)\,
t^{-\frac{n}{2p_0+mn}}
\left(
1+
\left(\frac{2^n}{V}\right)^{\frac{2p_0+mn}{p_0n}}
C_S\|u_0\|_{L^{p_0}}^m t
\right)^{\frac{n+2}{2p_0+mn}}
\|u_0\|_{L^{p_0}}^{\frac{2p_0}{2p_0+mn}},
\end{aligned}
\]
where
\[
C(n,m,p_0,C_S)
=
2^{\frac{(n+2)^2}{2(2p_0+mn)}}
C_S^{-\frac{n}{2p_0+mn}}
\exp\left[
\frac{n+1}{4p_0}
\left(
\frac{m}{p_0}+\frac{1}{p_0+m-1}
\right)
\right].
\]
When $n=2$, the same estimate holds with $n$ replaced by $3$ in both the estimate and the formula for \(C\), and with \(C_S\) taken to be the Sobolev constant for the embedding $W^{1,2}(\Sigma)\hookrightarrow L^6(\Sigma)$.
\end{thm}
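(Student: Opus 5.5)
We will prove the estimate by Moser iteration on the evolution of $L^p$ norms, in the form used for nonlinear diffusion (smoothing) estimates.

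\textbf{Energy identity.} For $p>\max\{0,1-m\}$, set $q=p+m-1>0$. Integrating by parts on the closed manifold gives
\[
\frac{d}{dt}\int_\Sigma u^p\,d\mu_\gamma
= p\int_\Sigma u^{p+m-1}\Delta u
= -p(p+m-1)\int_\Sigma u^{p+m-2}|\nabla u|^2
= -\frac{4p(p+m-1)}{(p+m)^2}\int_\Sigma |\nabla u^{\frac{p+m}{2}}|^2 .
\]
Monotonicity of all $L^p$ norms with $p>\max\{0,1-m\}$ follows at once. In particular $\|u(t)\|_{L^{p_0}}\le \|u_0\|_{L^{p_0}}$, and this is what makes $\|u_0\|_{L^{p_0}}$ appear in the final bound.

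\textbf{Localized-in-time Moser step.} Fix $0<\tau'<\tau\le t$ and a cutoff $\eta(s)$ with $\eta=0$ for $s\le\tau'$, $\eta=1$ for $s\ge\tau$ and $|\eta'|\le (\tau-\tau')^{-1}$. Multiply the identity by $\eta$ and integrate over $[0,t]$. This controls
\[
\sup_{[\tau,t]}\int u^p \quad\text{and}\quad \int_\tau^t\!\!\int |\nabla u^{\frac{p+m}{2}}|^2
\]
by $(\tau-\tau')^{-1}\int_{\tau'}^t\!\int u^p$. The Sobolev inequality in the form
\[
\|f\|_{L^{2n/(n-2)}}^2\le C_S\left(\|\nabla f\|_{L^2}^2+V^{-2/n}\|f\|_{L^2}^2\right)
\]
(with the $n=3$ exponent $6$ when $n=2$, which is why $n$ is replaced by $3$ there) is applied to $f=u^{(p+m)/2}$. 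Hölder interpolation in space--time then gives a reverse-H\"older step
\[
\iint u^{p\kappa+m} \lesssim \Big((\tau-\tau')^{-1}+C_SV^{-2/n}\Big)^{\kappa}\Big(\iint_{[\tau',t]} u^p\Big)^{\kappa},\qquad \kappa=1+\tfrac 2n .
\]
The $V^{-2/n}$ lower-order term is the source of the factor $(1+(2^n/V)^{\cdots}C_S\|u_0\|^m t)$ in the statement.

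\textbf{Iteration.} Choose exponents $p_{k+1}=\kappa p_k+m$ starting from $p_0$, and times $\tau_k=t(1-2^{-k-1})$ or similar, so that each step costs a factor $2^{k}/t$. The constants from $p(p+m-1)/(p+m)^2$ produce the exponential factor $\exp[\frac{n+1}{4p_0}(\frac m{p_0}+\frac1{p_0+m-1})]$ after summing logarithms, and the geometric sum of $k\,\kappa^{-k}$-type terms produces the powers of $2$.

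Letting $k\to\infty$ bounds $\|u(t)\|_{L^\infty}$ by a product of powers of $t^{-1}$ and $\int_0^t\!\int u^{p_0}\le t\|u_0\|_{L^{p_0}}^{p_0}$. The scaling exponents are forced by the invariance $u\mapsto\lambda u$, $t\mapsto\lambda^{-m}t$ and the spatial scaling: they are $t^{-n/(2p_0+mn)}$ and $\|u_0\|^{2p_0/(2p_0+mn)}$, with $\lim p_k/\kappa^k$ computed from $p_{k}+\frac{mn}{2}=\kappa^k(p_0+\frac{mn}2)$.

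\textbf{Main obstacle.} The difficulty is the exact bookkeeping. The nonlinearity means the gradient term controls $u^{(p+m)/2}$ rather than $u^{p/2}$. The time integral $\iint u^{p+m}$ must therefore be handled by pairing $\sup_t\int u^p$ with the $L^{2n/(n-2)}$ bound via interpolation so that the exponents close up. One must also track the inhomogeneous Sobolev term carefully enough to obtain the stated explicit constant.
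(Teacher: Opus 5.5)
You have the right architecture: the energy identity, time cutoffs, Sobolev applied to $u^{(p+m)/2}$, exponents $p_{k+1}=\kappa p_k+m$, and logarithmic summation of the factors $4p(p+m-1)/(p+m)^2$ all match the paper. However, the reverse-H\"older step you display is false, and the remedy you sketch under ``Main obstacle'' does not repair it.

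Applying the inhomogeneous Sobolev inequality to $f=u^{(p+m)/2}$ produces the lower-order term $V^{-2/n}\iint u^{p+m}$, not $V^{-2/n}\iint u^{p}$, and the two are not comparable. For $m>0$ your inequality fails on the constant solutions $u\equiv c$: as $c\to\infty$ the left side grows like $c^{p\kappa+m}$ and the right side only like $c^{p\kappa}$. Equivalently, under your scaling $u\mapsto\lambda u$, $t\mapsto\lambda^{-m}t$, the term $(\tau-\tau')^{-1}$ picks up a factor $\lambda^m$ while $C_SV^{-2/n}$ does not.

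Interpolating $\int u^{p+m}$ only between $\int u^p$ and $\int u^{\chi(p+m)}$, with $\chi=n/(n-2)$, and absorbing by Young does not help either. It leaves a term $C\,\|u(s)\|_{L^p}^m\int u^p$, whose coefficient involves the very norms the iteration is trying to bound and is not controlled by $\|u_0\|_{L^{p_0}}^m$ when $p>p_0$. In your scheme $\|u_0\|_{L^{p_0}}$ enters only at the very end, through $\iint u^{p_0}\le t\|u_0\|_{L^{p_0}}^{p_0}$. So the bracket $1+(2^n/V)^{\cdots}C_S\|u_0\|_{L^{p_0}}^m t$ cannot come out of it.

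The missing idea is to feed the time-uniform bound $\int u^{p_0}\le L^{p_0}$, where $L=\|u_0\|_{L^{p_0}}$, into every step. This is done through a three-factor H\"older inequality:
\[
\begin{aligned}
\int u^{p+m}&\le\Big(\int u^{p_0}\Big)^{\frac{2m}{mn+2p_0}}\Big(\int u^{p}\Big)^{\frac{2p_0}{mn+2p_0}}\Big(\int u^{\chi(p+m)}\Big)^{\frac{m(n-2)}{mn+2p_0}}\\
&\le\Big(L^m\int u^p\Big)^{\frac{2p_0}{mn+2p_0}}\Big(\int u^{\chi(p+m)}\Big)^{\frac{1}{\chi}\cdot\frac{mn}{mn+2p_0}}.
\end{aligned}
\]
Young's inequality then absorbs the last factor into the good Sobolev term. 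Before any time localization, this gives the linear differential inequality
\[
\frac{d}{dt}\int u^p\le-\frac12A(p,m)C_1\Big(\int u^{\chi(p+m)}\Big)^{\frac1\chi}+C_3L^m\int u^p,\qquad C_1=\frac{C_S}{2},\quad C_3=C_S\Big(\frac{2^n}{V}\Big)^{\frac{2p_0+mn}{p_0n}},
\]
where $A(p,m)=4p(p+m-1)/(p+m)^2$. The cutoff argument then gives your step with $(\tau-\tau')^{-1}+C_3L^m$ in place of $(\tau-\tau')^{-1}+C_SV^{-2/n}$. This coefficient scales correctly and is exactly what produces the stated bracket.

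A smaller point concerns the Sobolev constant. The statement uses the convention that $C_S$ is the infimum of $\|\nabla f\|_2^2/\|f\|_{2\chi}^2$ over mean-zero $f$, which gives $\frac{C_S}{2}\|f\|_{2\chi}^2\le\|\nabla f\|_2^2+C_SV^{-2/n}\|f\|_2^2$. Your Sobolev inequality puts $C_S$ on the other side. It would therefore not reproduce the power $C_S^{-n/(2p_0+mn)}$ in $C(n,m,p_0,C_S)$ or the factor $C_S$ inside the bracket.
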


The proof of Theorem \ref{nm} relies on a Nash–Moser type iteration scheme. Owing to the structure of \eqref{fde}, however, the iteration differs from the standard scheme for the heat equation. A related result was obtained by Grillo and Muratori in \cite{GM}, where similar $L^{p_0}$-to-$L^\infty$ estimates were established for the porous medium equation $u_t=\Delta(u^m)$ with homogeneous Neumann boundary conditions, under the assumptions $m>1$ and $p_0\geq 1$.

For Theorem \ref{cobordant2}, we derive an estimate by comparison with the corresponding ODE, relying primarily on the {\it reaction effect} of \eqref{qseq} arising from the negative coefficient of \(u^3\).

Theorems \ref{cobordant1} and \ref{cobordant2} are motivated in part by fill-in problems with scalar curvature bounded below, as posed by Gromov \cite{Gro96,Gro18}. Let \((\Sigma,\gamma)\) be a closed orientable Riemannian manifold,  and let $H$ be a smooth function on $\Sigma$. A compact Riemannian manifold \((\Omega,g)\) with boundary is called a {\it fill-in} of \((\Sigma,\gamma,H)\) if there exists an isometry
\[
X:(\Sigma,\gamma)\longrightarrow (\partial\Omega,g|_{\partial\Omega})
\]
such that
\(
H=H_g\circ X
\)
on \(\Sigma\), where \(H_g\) denotes the mean curvature of \(\partial\Omega\) in \((\Omega,g)\). In \cite{Gro18}, Gromov proposed the following conjecture:

\begin{conjecture}[Gromov \cite{Gro18}]\label{conj1}
Let \(\sigma\) be a constant. If \((\Sigma,\gamma,H)\) admits a fill-in with scalar curvature \(R\geq \sigma\), then
\[\min_{\Sigma} H\leq \Lambda,\]
where \(\Lambda\) is a constant depending only on \(\sigma\) and the intrinsic geometry of \((\Sigma,\gamma)\).
\end{conjecture}

Conjecture \ref{conj1} predicts that sufficiently large boundary mean curvature rules out fill-ins with a prescribed lower bound on scalar curvature. Related results had previously been obtained; see, for example, \cite{Miao02,SWW,SWWZ}. It was eventually resolved in full by Miao \cite{Miao21}, who combined the extension result of \cite{SWW} with the theorem of Schoen and Yau \cite{SY792,SY17} asserting that $\mathbf T^n\# M^n$ admits no metric of positive scalar curvature for any closed orientable manifold \(M^n\).

Inspired by these results, we consider the following invariant of \((\Sigma,\gamma)\). For \(\sigma\in\mathbb R\), define 
\[
\eta(\Sigma,\gamma,\sigma)=\sup\left\{\,\min_{\Sigma} H\,\big|\,(\Sigma,\gamma,H)\text{ admits a fill-in with scalar curvature $R\geq\sigma$}\,\right\}.
\]
Miao’s result can then be expressed as $\eta(\Sigma,\gamma,\sigma)<+\infty$. It is therefore natural to seek quantitative upper bounds for this invariant and to investigate rigidity in the equality case. This line of research has been fruitful for spin fill-ins; see, for example, \cite{AL,B0,BTW,CHZ,Gro18,HMR,HMZ}. In particular, Gromov \cite{Gro18} proved that if the Bartnik data $(\Sigma^n,\gamma,H)$ admit a spin fill-in with nonnegative scalar curvature, then
\begin{equation*}
\min_{\Sigma}H
\leq
\frac{n}{\mathrm{Rad}(\Sigma,\gamma)}.
\end{equation*}
Here, $\mathrm{Rad}(\Sigma,\gamma)$ denotes the hyperspherical radius introduced by Gromov and Lawson \cite{GL80}, defined by
\begin{equation*}
\mathrm{Rad}(\Sigma,\gamma)
=
\sup\left\{
\,r>0
\,\middle|\,
\begin{array}{c}
\text{there exists a $1$-Lipschitz map}\\
(\Sigma,\gamma)\longrightarrow \mathbf{S}^n(r)
\text{ of nonzero degree}
\end{array}
\right\}.
\end{equation*}
More recently, combining the Hijazi--Montiel--Rold\'an inequality \cite{HMR} with B\"ar's eigenvalue estimate \cite{B0}, Brendle, Tsiamis, and Wang \cite{BTW} obtained an analogous sharp estimate for spin fill-ins whose scalar curvature is bounded below by $-n(n+1)$. Without the spin assumption, Wang, Wang, and Zhu \cite{WWZ} established Gromov’s estimate and rigidity in the equality case for nonnegative scalar curvature fill-ins of $3$-dimensional manifolds.

As an application of Theorem \ref{cobordant1}, we obtain the following estimate relating uniform positive lower bounds for \(\int_\Sigma H^{-p}\,d\mu_\gamma\) to \(\eta(\Sigma,\gamma,\sigma)\).

\begin{thm}\label{finiteness1} 
Let \((\Sigma^n,\gamma)\) be a closed orientable Riemannian manifold with \(n\geq2\), and let $H$ be a smooth positive function on $\Sigma$. Fix a constant \(\sigma\leq 0\) and assume that \((\Sigma^n,\gamma,H)\) admits a fill-in with scalar curvature \(R\geq\sigma\). If \(n\geq3\), then for every \(p>0\), 
\begin{equation}\label{harmonicmean}
\int_{\Sigma}\frac{1}{H^{p}} \,d\mu_{\gamma}\geq C(n,p)\min\Bigg\{C_S^{\frac{n}{2}}B^{-(p+n)},\left(C_{S}\left(\frac{2^n}{V}\right)^{
\frac{(n+2)(p+n)}{pn}}B^{p+n}\right)^{-\frac{p}{p+n+2}}
\Bigg\}.
\end{equation}
Here $C_S$ and $V$ denote the Sobolev constant and the volume of $(\Sigma,\gamma)$, respectively, and
\[
B=\tau(n)\eta\left(\Sigma,
\gamma,n^{-2}\tau(n)^{-2}\min\big\{\sigma,\min_{\Sigma}R_{\gamma}\big\}\right).
\]
The constants \(C(n,p)\) and \(\tau(n)\) depend only on the indicated parameters, and their explicit expressions are given in \(\eqref{99}\) and \(\eqref{910}\), respectively. When \(n=2\), the same bound holds with \(n\) replaced by \(3\), except that \(B\) is still defined using the actual dimension \(n=2\).
\end{thm}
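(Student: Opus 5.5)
Given a fill-in \((\Omega,g)\) of \((\Sigma,\gamma,H)\) with \(R_g\geq\sigma\), the plan is to glue a neck to \(\partial\Omega\) that turns the integral largeness of \(H\) into a pointwise lower bound on the mean curvature of a new boundary. This produces a fill-in of rescaled Bartnik data, and the definition of \(\eta\) then caps the achievable pointwise bound. Reading the resulting contradiction backwards gives the lower bound \eqref{harmonicmean}.

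\textbf{Step 1: the neck.} Run the quasi-spherical construction behind Theorem \ref{cobordant1}, with initial datum \(u_0\) proportional to \(1/H\), so that \(\|u_0\|_{L^{p}}\) is comparable to \(\big(\int_\Sigma H^{-p}\,d\mu_\gamma\big)^{1/p}\). Theorem \ref{nm} with \(m=2\) (so any \(p_0=p>0\) is admissible) controls \(\|u(\cdot,t)\|_{L^\infty}\) at the terminal time \(t\sim1\). Since \(u\) is the ratio of background to actual slice mean curvature, this gives a terminal mean curvature \(\widetilde H\geq c\,\|u(\cdot,1)\|_{L^\infty}^{-1}\). The reaction terms in \eqref{qseq} should only help, by comparison. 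The resulting cobordism goes from \((\gamma,-H)\) to \((2^{4/(n+1)}\gamma,\widetilde H)\), with scalar curvature at least \(\min\{\sigma,\min_\Sigma R_\gamma\}\) up to the factor \(2^{-4/(n+1)}\).

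\textbf{Step 2: gluing and rescaling.} Glue the neck to \(\Omega\) along \(\Sigma\times\{0\}\). The mean curvatures match with opposite orientation: the neck's boundary mean curvature there is \(-H\), computed with the inward normal of the neck. The resulting corner is handled by the standard smoothing of Miao / Brendle--Marques--Neves type, at an arbitrarily small loss in scalar curvature.

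This yields a fill-in of \((\Sigma,2^{4/(n+1)}\gamma,\widetilde H)\). Rescale the metric by \(2^{-4/(n+1)}\) to get back to \(\gamma\). This multiplies \(\widetilde H\) by \(2^{2/(n+1)}\) and changes the scalar curvature lower bound by the corresponding factor. The constant \(\tau(n)\) absorbs these dimensional factors, together with the \(n^{-2}\) normalisation. Hence
\[
\min_\Sigma \widetilde H \leq \tau(n)\,\eta\big(\Sigma,\gamma,n^{-2}\tau(n)^{-2}\min\{\sigma,\min_\Sigma R_\gamma\}\big)=B .
\]

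\textbf{Step 3: inverting the estimate.} Combine Step 2 with the lower bound for \(\widetilde H\) from Theorem \ref{nm} with \(m=2\), \(p_0=p\), \(t=1\). With \(A=\big(\int_\Sigma H^{-p}\,d\mu_\gamma\big)^{1/p}\), this reads
\[
B\gtrsim C_S^{\frac{n}{2p+2n}}\Big(1+(2^n/V)^{\frac{2p+2n}{pn}}C_SA^2\Big)^{-\frac{n+2}{2p+2n}}A^{-\frac{p}{p+n}} .
\]
Split into the two regimes according to which term in the parenthesis dominates.
\begin{itemize}
\item If \(1\) dominates, we get \(A^{p}\gtrsim C_S^{n/2}B^{-(p+n)}\).
\item If the \(A^2\) term dominates, we solve for \(A\) and obtain the second expression in the minimum, with exponent \(-p/(p+n+2)\).
\end{itemize}
The stated minimum covers both cases. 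For \(n=2\), use the \(n=3\) version of Theorem \ref{nm}, while the geometric constant \(B\) keeps \(n=2\).

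\textbf{Main obstacle.} The delicate step is Step 1. We must check that Theorem \ref{nm}, proved for \(u_t=u^2\Delta u\), really transfers to the full equation \eqref{qseq}, with the time-dependent metrics \(\gamma_t=t^{4/(n+1)}\)-type scalings and the factor \(\bar H_t\). I would first reparametrise time and rescale \(u\) to remove \(\bar H_t\). Then I would choose the prescribed scalar curvature \(\varphi\) so that the reaction terms have a favourable sign. Finally, I would use a comparison or maximum principle to dominate \(u\) by a rescaled solution of the model equation.
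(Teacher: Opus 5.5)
\textbf{Step 1 is a genuine gap, and the mechanism you sketch for it does not work as stated.} In \eqref{qseq}, $\varphi$ enters only the cubic coefficient $\frac12(\varphi-R_{\gamma_t})$. The linear coefficient $\frac12(R_{\gamma_t}-R_{\bar g})$ is fixed by the background path. For $\bar g=dt^2+f(t)^2\gamma$ it equals $\frac{n}{f^2}\bigl(\frac{n-1}{2}f'^2+ff''\bigr)$, which is positive for, e.g., a linear $f$. With $\varphi=R_{\gamma_t}$, $u$ is then a supersolution of the time-changed model equation. So the reaction terms do not ``only help'', and comparison with $v_s=v^2\Delta_\gamma v$ gives no upper bound on $u$.

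The missing idea is the choice of path. The paper takes $f(t)=(1+t)^{2/(n+1)}$, which solves $\frac{n-1}{2}f'^2+ff''=0$, i.e.\ $R_{\bar g}=R_{\gamma_t}$, and prescribes $\varphi=R_{\gamma_t}$. Both reaction terms then vanish identically, and \eqref{qseq} becomes $\frac{2n}{n+1}(1+t)^{\frac{3-n}{n+1}}\partial_t u=u^2\Delta_\gamma u$. The substitution $s(t)=\frac{(n+1)^2}{4n(n-1)}\bigl((1+t)^{\frac{2(n-1)}{n+1}}-1\bigr)$ turns this \emph{exactly} into $\partial_s\tilde u=\tilde u^2\Delta_\gamma\tilde u$, so Theorem \ref{nm} applies directly and no comparison is needed. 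For a general $f$ you could instead remove the linear term via $u=a(t)w$ with $ff'a'=(\frac{n-1}{2}f'^2+ff'')a$, then change time. Note that it is the rescaling of $u$ that removes the linear term, while the time change removes $\bar H_t$. This path also gives $R_g=(1+t)^{-4/(n+1)}R_\gamma\geq\min\{\sigma,\min_\Sigma R_\gamma\}$ with no extra factor, precisely because $\sigma\le 0$. That is where this hypothesis is used, which your sketch does not identify.

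\textbf{Step 2's bookkeeping does not produce the stated $\tau(n)$.} Write $\sigma_0=\min\{\sigma,\min_\Sigma R_\gamma\}$. Stopping at $t=1$ and rescaling gives $\min\widetilde H\le 2^{-2/(n+1)}\eta\bigl(\Sigma,\gamma,2^{4/(n+1)}\sigma_0\bigr)$, which is not $B$. In the paper, $n^{-2}\tau(n)^{-2}=P^{2/(n-1)}$ with $P=1+\frac{4n(n-1)}{(n+1)^2}$. This comes from running the neck to the time $T$ with $s(T)=1$, so $(1+T)^{4/(n+1)}=P^{2/(n-1)}$ and Theorem \ref{nm} is applied at $s=1$.

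The $1/n$ is not a scaling factor. The paper sets $B=\frac1n\eta\bigl(\Sigma,(1+T)^{4/(n+1)}\gamma,\sigma_0\bigr)$, so only $\min H_T\le nB$ holds. The $n$ then cancels against $\bar H_T=\frac{2n}{(n+1)(1+T)}$ in $H_T=\bar H_T/u$. Hence your claim $\min\widetilde H\le B$ is unjustified. Your $t=1$ neck can still be converted into a bound in terms of $B$, using $s(1)<1$ (so $T>1$), $\sigma_0\le 0$, and the monotonicity of $\eta$ in $\sigma$. But the resulting constant differs from \eqref{99}.

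Similarly, smoothing ``at an arbitrarily small loss'' only bounds $\min\widetilde H$ by $\eta$ at $\sigma_0-\epsilon$, and nothing gives continuity of $\eta$ in $\sigma$. The paper follows the mollification with a conformal change fixing the boundary metric, so that $R\geq\sigma_0$ holds exactly. Your Step 3, the two-regime inversion, is correct and matches the paper.
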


Recall the classical Heintze--Karcher--Ros inequality \cite{HK,ro}: if $(\Omega^{n+1},g)$ is a fill-in of $(\Sigma,\gamma,H)$ with nonnegative Ricci curvature and \(H>0\), then
\[
\int_{\Sigma}\frac{1}{H}\,d\mu_{\gamma}
\geq
\frac{n+1}{n}\operatorname{Vol}(\Omega).
\]
The \(p=1\) case of Theorem \ref{finiteness1} may therefore be viewed as a scalar curvature analogue of the Heintze--Karcher--Ros inequality.

For spin fill-ins, combining the sharp upper bound for \(\min_\Sigma H\) due to Gromov \cite{Gro18} and to Brendle, Tsiamis, and Wang \cite{BTW} with the argument of Theorem \ref{finiteness1}, we obtain the following lower bound for $\int_{\Sigma} H^{-p}\,d\mu_\gamma$ in terms of explicit boundary geometric data.

\begin{cor}\label{ros}
Let \((\Sigma^n,\gamma)\) be a closed orientable Riemannian manifold with \(n\geq2\), and let $H$ be a smooth positive function on $\Sigma$. Fix a constant \(\sigma\leq0\) and assume that \((\Sigma^n,\gamma,H)\) admits a spin fill-in with scalar curvature \(R\geq\sigma\). If \(n\geq3\), then for every \(p>0\), \eqref{harmonicmean} holds with
\[B=\left(\frac{-\min\left\{\sigma, \min_{\Sigma}\mathrm{R}_{\gamma}\right\}}{n(n+1)}+\mathrm{Rad}(\Sigma,\gamma)^{-2}\right)^{\frac{1}{2}}.\]
When \(n=2\), the same bound holds with \(n\) replaced by \(3\), except that \(B\) is still defined using the actual dimension \(n=2\).
 \end{cor}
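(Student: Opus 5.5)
The plan is to run the proof of Theorem \ref{finiteness1} again, changing only the source of the bound on $\eta$: Miao's qualitative finiteness is replaced by the sharp spin estimates of Gromov and of Brendle--Tsiamis--Wang. The first step is to record how $B$ enters the proof of Theorem \ref{finiteness1}. Suppose $(\Sigma,\gamma,H)$ has a fill-in $(\Omega,g)$ with $R\geq\sigma$. Gluing the neck from Theorem \ref{cobordant1} to $\Omega$ along $\Sigma$ produces a fill-in of $(2^{4/(n+1)}\gamma,\widetilde H)$. After smoothing the corner in the usual way, which is allowed because the mean curvatures match with opposite signs, its scalar curvature is bounded below by $\min\{\sigma,\min_\Sigma R_\gamma\}$, up to the explicit factors $2^{-4/(n+1)}$ and $n^{-2}\tau(n)^{-2}$. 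The new boundary mean curvature satisfies $\widetilde H\geq\kappa$ for any $\kappa$ exceeding the relevant $\eta$, so the filling would violate that bound. It follows that $\|H\|_{L^{-p}}$ must lie below the threshold $C(n,p,\kappa,C_S,V)$ of Theorem \ref{cobordant1}, and after rescaling this threshold is exactly the minimum in \eqref{harmonicmean} with $B$ in place of $\kappa$. Hence the proof of Theorem \ref{finiteness1} shows that \eqref{harmonicmean} holds with $B$ replaced by any number $B'$ for which every spin fill-in of the rescaled data with scalar curvature bounded below by the stated constant has $\min H<B'$ (or $\leq B'$, with the strict case recovered by letting $B'\downarrow$).

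The second step is to verify that the construction stays in the spin class. The neck is diffeomorphic to $\Sigma\times[0,1]$, and $\Sigma=\partial\Omega$ inherits a spin structure from $\Omega$. So $\Omega\cup(\Sigma\times[0,1])$ deformation retracts onto $\Omega$ and is spin. The glued manifold is therefore a spin fill-in, and the sharp spin estimate may be applied to it in place of $\eta$.

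The third step is to write down that spin estimate for boundary $(\Sigma,c^2\gamma)$ with scalar curvature $\geq -n(n+1)\lambda^2$, $\lambda\ge 0$. By scaling the Brendle--Tsiamis--Wang bound, and using Gromov's bound when $\lambda=0$, we get
\[
\min H\leq n\left(\lambda^2+\mathrm{Rad}(\Sigma,c^2\gamma)^{-2}\right)^{1/2},
\qquad \mathrm{Rad}(\Sigma,c^2\gamma)=c\,\mathrm{Rad}(\Sigma,\gamma).
\]
Substituting the lower bound $n^{-2}\tau(n)^{-2}\min\{\sigma,\min R_\gamma\}$ (after the neck rescaling), together with $c=2^{2/(n+1)}$, gives an upper bound of the form $\tau(n)\cdot(\text{const})\cdot B$. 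The normalization factors $n$ and $\tau(n)$ in the definition of $B$ in Theorem \ref{finiteness1} were presumably chosen to cancel exactly here. Any leftover dimensional constant is absorbed into $C(n,p)$.

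The main obstacle will be this bookkeeping. We must check that the rescalings by $2^{4/(n+1)}$ and by $\tau(n)$ produce precisely the stated $B$, or at worst a constant multiple of it that can be absorbed. We must also check the spin convention, namely that ``spin fill-in'' means $\Omega$ is spin, so the boundary is spin. Once the intrinsic spin bound is accepted as a replacement for $\eta$, the $n=2$ case follows from the $n=2$ version of Theorem \ref{nm}, exactly as in Theorem \ref{finiteness1}.
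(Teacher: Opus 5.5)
Your overall route is the paper's. The proof of Theorem \ref{finiteness1} uses the fill-in only through the inequality $\min_\Sigma H_T\le\eta(\cdots)$. The glued manifold $\Omega\cup_\Sigma(\Sigma\times[0,T])$ is diffeomorphic to $\Omega$ and hence spin, so $\eta$ can be replaced by its spin analogue, which is then bounded by rescaling the estimates of Gromov and of Brendle--Tsiamis--Wang. Since the right-hand side of \eqref{harmonicmean} is decreasing in $B$, an upper bound for the spin invariant suffices. The problem is the bookkeeping you defer. Your fallback of absorbing a leftover factor into $C(n,p)$ is not legitimate here: the corollary asserts \eqref{harmonicmean} with the specific constant \eqref{99}, so a constant multiple of $B$ would not give the stated inequality.

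Three corrections make the argument close exactly.

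First, the neck in the proof of Theorem \ref{finiteness1} is not the $T=1$ neck of Theorem \ref{cobordant1}, whose terminal metric is $2^{4/(n+1)}\gamma$. It is the neck with $s(T)=1$, whose terminal metric is $P^{2/(n-1)}\gamma$ with $P=1+\frac{4n(n-1)}{(n+1)^2}$. The constants \eqref{99} and \eqref{910} are calibrated to this choice, so the relevant scale is $c=P^{1/(n-1)}$, not $2^{2/(n+1)}$. Also, the explicit bound comes from the direct computation \eqref{8311}--\eqref{eq:h-compact}, not from the non-explicit threshold of Theorem \ref{cobordant1}.

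Second, because $\sigma\le0$, the glued manifold satisfies $R\ge\min\{\sigma,\min_\Sigma R_\gamma\}=:-n(n+1)\lambda^2$ with no extra factors. The factor $n^{-2}\tau(n)^{-2}=P^{2/(n-1)}$ appears only after rescaling the boundary metric back to $\gamma$. You must therefore use either $c$ or this factor, not both.

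Third, the proof then yields \eqref{harmonicmean} with $B'=\frac1n\eta^{\mathrm{spin}}(\Sigma,P^{2/(n-1)}\gamma,-n(n+1)\lambda^2)$. Note the $\frac1n$: what is needed is $\min_\Sigma H_T\le nB'$, not $\min H<B'$. Your scaled spin bound gives $B'\le\bigl(\lambda^2+P^{-2/(n-1)}\mathrm{Rad}(\Sigma,\gamma)^{-2}\bigr)^{1/2}\le\bigl(\lambda^2+\mathrm{Rad}(\Sigma,\gamma)^{-2}\bigr)^{1/2}=B$, using only $P>1$.

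So no dimensional constant is left over. The factors $n$ and $\tau(n)$ cancel exactly, and the last step merely discards $P^{-2/(n-1)}<1$, which is the chain \eqref{sca}.
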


Theorem \ref{finiteness1} shows that sufficiently large boundary mean curvature, in the harmonic mean sense, rules out fill-ins with a prescribed lower bound on scalar curvature. In fact, Gromov had already conjectured the stronger statement that sufficiently large total mean curvature does.

\begin{conjecture}[Gromov \cite{Gro23}]\label{conj}
Let \((\Sigma,\gamma)\) be a closed orientable Riemannian manifold, and let $H$ be a smooth positive function on $\Sigma$. Fix \(\sigma\in\mathbb R\) . If \((\Sigma,\gamma,H)\) admits a fill-in with scalar curvature \(R\geq \sigma\), then
\[\int_{\Sigma}H d\mu_{\gamma}\leq\Lambda,\]
where \(\Lambda\) is a constant depending only on \(\sigma\) and the intrinsic geometry of \((\Sigma,\gamma)\).
\end{conjecture}

The celebrated nonnegativity of the Brown--York mass, established by Shi and Tam \cite{ST02}, implies that Conjecture \ref{conj} holds for $(\mathbf S^2,\gamma)$ when $\gamma$ has positive Gaussian curvature, $H>0$, and $\sigma=0$. Subsequently, building on the work of Shi and Tam \cite{ST07}, Mantoulidis and Miao \cite{MM} established the same conclusion for arbitrary smooth metrics on $\mathbf S^2$. Shi and the present authors \cite{SWW} extended this result to arbitrary smooth metrics on \(\mathbf S^n\) for \(n\leq6\) and arbitrary \(\sigma\). Combined with the recent asymptotically hyperbolic positive mass theorem \cite{HKLZ, Tsang}, their argument also applies when \(n\geq7\).

For $\mathbf{T}^2$ equipped with an arbitrary flat metric, Chen, Liu, Shi, and Zhu \cite{CLSZ} proved Conjecture \ref{conj} in the case \(\sigma=0\), assuming \(H>0\). For \(2\leq n\leq6\), Wang \cite{WangY} established the corresponding bound for arbitrary metrics on \(\mathbf T^n\) and general scalar curvature lower bounds, assuming \(H>0\) and additionally requiring the fill-in to be diffeomorphic to \(\mathbf D^2\times\mathbf T^{n-1}\). More recently, B\"ar \cite{B} proved Conjecture \ref{conj} for spin fill-ins. His estimate also allows \(H\) to take negative values, with additional dependence on a lower bound for \(H\). However, his very recent work \cite{B1} shows that this additional dependence is necessary in general for fill-ins of dimension at least three. Independently, Frenck, Hanke, and Hirsch \cite{FHH} made substantial progress by using surgery to reduce the problem to the case of spherical boundaries. More precisely, they proved the following theorem.

\begin{thm}[Frenck--Hanke--Hirsch \cite{FHH}]\label{fhh}
Let $(\Sigma^n,\gamma)$ be a closed Riemannian manifold and let
$\sigma,\kappa\in\mathbb{R}$. Then there exists a constant
$\Lambda=\Lambda(\Sigma,\gamma,\sigma,\kappa)>0$ with the following
property: if $(\Omega,g)$ is a fill-in of $(\Sigma,\gamma,H)$ such that
\[
H\geq\kappa,\qquad R_{g}\geq\sigma,
\]
and one of the following conditions holds:
\begin{enumerate}
\item[(A)] $\kappa>0$ and $n\leq6$;
\item[(B)] $\kappa=0$, $n\leq6$, and $\Sigma$ admits a spin structure;
\item[(C)] $\Omega$ admits a spin structure,
\end{enumerate}
then
\[
\int_\Sigma H\,d\mu_\gamma\leq\Lambda.
\]
\end{thm}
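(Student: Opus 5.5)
This statement is quoted from \cite{FHH}, and the argument sketched here follows their strategy: reduce to spherical boundaries, then invoke the spherical case. The reduction would go through a cobordism $W$ from $(\Sigma,\gamma)$ to a round sphere $(\mathbf S^n,r^2 g_{\mathbf S^n})$. The plan is to glue $W$ onto a given fill-in $(\Omega,g)$ of $(\Sigma,\gamma,H)$ along $\Sigma$. This produces a fill-in $\Omega\cup_\Sigma W$ of a spherical boundary whose scalar curvature is still bounded below and whose total boundary mean curvature controls $\int_\Sigma H\,d\mu_\gamma$.

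\textbf{Step 1 (cobordism by surgery).} Since $\Sigma$ is null-cobordant, I would pick a topological cobordism from $\Sigma$ to $\mathbf S^n$. I would decompose it into handle attachments and realise each one by a Gromov--Lawson type surgery construction. The aim is a metric on $W$ with $R\geq\sigma'$, where $\sigma'$ depends only on $(\Sigma,\gamma,\sigma)$. Near $\Sigma$, I would insert a quasi-spherical collar $\Sigma\times[0,1]$ built from Bartnik's equation \eqref{qseq}, starting from the data $(\gamma,-H)$.

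\textbf{Step 2 (gluing and transfer of total mean curvature).} Along $\Sigma$, the Miao corner condition $H_\Omega+H_W\geq0$ allows smoothing to a metric with $R\geq\sigma'-\epsilon$. Along the collar, the Shi--Tam monotonicity of $\int(\bar H-H_u)$, that is $\int \bar H(1-u^{-1})$, gives a lower bound for the total mean curvature at the far end of the collar in terms of $\int_\Sigma H$, up to constants depending only on $\gamma$. This is where the hypothesis $H\geq\kappa>0$ enters. It gives uniform existence of the collar solution, and it yields a two-sided comparison between $u$ and the background with constants depending only on $\kappa$ and $\gamma$. These bounds are what make the transfer estimate quantitative.

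\textbf{Step 3 (spherical case).} At the spherical end, I would apply the bound for fill-ins of arbitrary metrics on $\mathbf S^n$ from \cite{SWW}. Its proof glues on an exterior quasi-spherical, asymptotically flat or hyperbolic end and uses the positive mass theorem. For $n\leq 6$ this is available without spin via Schoen--Yau, which gives case (A). For case (B), $\kappa=0$, the collar in Step 1 degenerates. Here I would instead use that a spin $\Sigma$ allows the surgeries to be done inside the spin category, and control $\int H$ by a Dirac/Llarull-type comparison on the spin slices, pushing $H\geq0$ to a definite lower bound. For case (C), I would bypass surgery entirely. I would use the spin Dirac operator on $\Omega$ with APS/chiral boundary conditions, as in B\"ar's argument \cite{B}. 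The Schrödinger--Lichnerowicz formula plus a boundary Reilly term, combined with a harmonic spinor built from a Lipschitz map to a sphere, yields $\int_\Sigma H\leq\Lambda$ directly.

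\textbf{Main obstacle.} I expect the main difficulty to be Step 2, namely making the quasi-spherical collar solvable with quantitative control for arbitrary $H\geq\kappa$, where the constants must not depend on $\max H$. My first attempt would be to compare $u$ with the ODE obtained by dropping the Laplacian term in \eqref{qseq}. That ODE is monotone in the initial data $1/H\leq 1/\kappa$, so the maximum principle should give uniform bounds on $u$.
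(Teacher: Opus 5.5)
The paper does not prove this theorem. It is quoted from \cite{FHH}, and only part (A) is used, as a black box, in the proof of Corollary~\ref{finiteness2}. The paper merely remarks that (A) comes from a surgery reduction to the spherical case of \cite{SWW}. Judged on its own, your outline has a genuine gap exactly where that reduction has content.

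Write $W=Z\cup W_0$, with $Z=\Sigma\times[0,1]$ the quasi-spherical collar and $W_0$ the topology-changing part. The collar is foliated by copies of $\Sigma$, so it can only carry $\int H$ to another copy of $\Sigma$. The passage $\Sigma\to\mathbf S^n$ happens in $W_0$, and for $W_0$ you only ask for $R\geq\sigma'$, which any fixed smooth metric on a compact cobordism satisfies. Nothing in your Steps 1--2 makes the metric on $W_0$ depend on $H$. If it does not, then $\Omega\cup Z\cup W_0$ is a fill-in of \emph{fixed} spherical Bartnik data, and the bound of \cite{SWW} says nothing about $\int_\Sigma H\,d\mu_\gamma$. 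What is missing is an estimate $\int_{\mathbf S^n}H'\,d\mu\geq c\int_\Sigma H\,d\mu_\gamma-C$, with $c>0$ and $C$ depending only on $(\Sigma,\gamma,\sigma,\kappa)$. It would show that the handle attachments retain most of the total mean curvature. One route is to attach handles only along small regions of the boundary, which forces you to control the discarded part of $\int H$ and the corner conditions at the attaching regions. Your opening paragraph asserts this conclusion, but no step produces it.

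The role you assign to $\kappa>0$, and hence your ``main obstacle'', is also misplaced, as the paper shows. With the homothetic background $\gamma_t=(1+t)^{4/(n+1)}\gamma$, equation \eqref{quassphericalueq} becomes $\partial_s u=u^2\Delta_\gamma u$, or $\partial_s u=u^2\Delta_\gamma u-\lambda u^3$ in Theorem~\ref{cobordant2}. For every smooth $H>0$ the maximum principle keeps $u$ between positive constants, so the collar exists. Moreover $\int_\Sigma u^{-1}\,d\mu_\gamma$ is conserved (resp.\ nondecreasing), and by \eqref{H1} this controls the terminal total mean curvature with no a priori bounds on $u$ at all. A two-sided bound on $u$ depending only on $\kappa$ and $\gamma$ cannot hold anyway, since $u(\cdot,0)=\frac{2n}{n+1}H^{-1}$ is arbitrarily small where $H$ is large.

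Nor does the collar degenerate at $\kappa=0$. Theorem~\ref{cobordant2} turns any $H>0$ into $\widetilde H\geq c(n,\min_\Sigma R_\gamma-\delta)>0$ with $\int_\Sigma H\,d\mu_\gamma\leq 2\int_\Sigma\widetilde H\,d\mu_\gamma$. Together with \cite[Proposition 3.8]{BH23}, this is exactly how the paper obtains case (B) from case (A) without any spin assumption. So your separate spin argument for (B) is unnecessary. It and your sketch of (C) are too vague to check. In particular, a spinor twisted by a Lipschitz map to a sphere is the mechanism behind Gromov's bound on $\min_\Sigma H$; by itself it gives no integral bound on $H$.
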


Since part (A) reduces to the spherical case in \cite{SWW}, the recent asymptotically hyperbolic positive mass theorem \cite{HKLZ, Tsang} removes the dimension restriction; see also \cite[Remark 1.2]{FHH}.

Using Theorem \ref{cobordant2}, we reduce the \(H\geq0\) case to part (A) of Theorem \ref{fhh}. This removes the spin assumption on \(\Sigma\) in part (B) and yields the following result.

\begin{cor}\label{finiteness2}
Conjecture \ref{conj} holds for \(H\geq0\), without any spin assumption.
\end{cor}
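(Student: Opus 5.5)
Given a fill-in $(\Omega,g)$ of $(\Sigma,\gamma,H)$ with $H\geq0$ and $R_g\geq\sigma$, we will attach a neck from Theorem \ref{cobordant2} to obtain a fill-in of rescaled Bartnik data whose mean curvature is bounded below by a fixed positive constant. Part (A) of Theorem \ref{fhh}, without its dimension restriction by the remark following it, then bounds the new total mean curvature. The integral inequality in Theorem \ref{cobordant2} transfers this bound back to $\int_\Sigma H\,d\mu_\gamma$.

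\emph{Step 1 (choice of parameters).} Set $\delta=\min\{\sigma,\min_\Sigma R_\gamma\}-1<\min_\Sigma R_\gamma$, which depends only on $\sigma$ and $\gamma$. Theorem \ref{cobordant2} requires $H>0$, so we first replace $H$ by a strictly positive function. We do this by bending the metric slightly near $\partial\Omega$, for instance by a small conformal or collar perturbation that raises the mean curvature by at most $\epsilon$ while lowering the scalar curvature by at most $1/2$. The result is a fill-in of $(\Sigma,\gamma,H_\epsilon)$ with $H_\epsilon\geq H+\epsilon/2>0$, $R\geq\sigma-1/2$, and $\int H_\epsilon\leq\int H+\epsilon V$. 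This preliminary perturbation is the technical step I expect to need the most care, although it is standard.

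\emph{Step 2 (gluing).} Apply Theorem \ref{cobordant2} to $H_\epsilon$. It gives a neck $g_1$ on $\Sigma\times[0,1]$ from $(\gamma,-H_\epsilon)$ to $(2^{4/(n+1)}\gamma,\widetilde H)$, with
\[
\widetilde H\geq\kappa_0:=\Big[\tfrac{n}{4(n-1)}\big(2^{\frac{2(n-1)}{n+1}}-1\big)(\min_\Sigma R_\gamma-\delta)\Big]^{1/2}>0
\]
and $R_{g_1}\geq\min\{\delta,2^{-4/(n+1)}\delta\}$. At $\Sigma\times\{0\}$ the inward normal of the neck is the outward normal of $\Omega$, so the mean curvatures match with opposite sign: $H_\epsilon$ from $\Omega$ and $-H_\epsilon$ from the neck. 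Gluing therefore gives a metric that is Lipschitz across the interface, with the mean curvature jump condition satisfied as an equality. By Miao's mollification, or the smoothing results of Brendle--Marques--Neves type used in \cite{SWW}, we smooth the corner at the cost of an arbitrarily small loss in scalar curvature. This yields a smooth fill-in $(\Omega',g')$ of $(\Sigma,2^{4/(n+1)}\gamma,\widetilde H)$ with $R_{g'}\geq\sigma':=\min\{\delta,2^{-4/(n+1)}\delta\}-1$. The smoothing keeps the boundary data unchanged, or changes it only negligibly.

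\emph{Step 3 (conclusion).} Apply Theorem \ref{fhh}(A), with $\kappa=\kappa_0>0$ and the dimension restriction removed, to the data $(\Sigma,2^{4/(n+1)}\gamma,\widetilde H)$. This gives $\int_\Sigma\widetilde H\,d\mu_{2^{4/(n+1)}\gamma}\leq\Lambda'$, where $\Lambda'$ depends only on $\gamma$ and $\sigma$. Scaling the area form gives $d\mu_{2^{4/(n+1)}\gamma}=2^{2n/(n+1)}d\mu_\gamma$. Combining with the last inequality of Theorem \ref{cobordant2},
\[
\int_\Sigma H\,d\mu_\gamma\leq\int_\Sigma H_\epsilon\,d\mu_\gamma\leq 2\int_\Sigma\widetilde H\,d\mu_\gamma\leq 2^{1-\frac{2n}{n+1}}\Lambda'.
\]
(The first inequality uses $H_\epsilon\geq H$.) This bound is independent of $\epsilon$, so it proves Conjecture \ref{conj} for $H\geq0$. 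Beyond the Step 1 perturbation, the only other point to check is that the corner smoothing preserves the uniform lower bound $\widetilde H\geq\kappa_0$. This holds because the smoothing is supported near the interior interface $\Sigma\times\{0\}$, away from the new boundary.
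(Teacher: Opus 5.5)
Your proposal is correct and follows the paper's proof essentially step for step. You make $H$ strictly positive near $\partial\Omega$ at small cost in scalar curvature (the paper cites \cite[Proposition 3.8]{BH23}), attach the neck from Theorem \ref{cobordant2}, smooth the corner, apply Theorem \ref{fhh}(A), and transfer the bound back via $\int_\Sigma H_\epsilon\,d\mu_\gamma\leq 2\int_\Sigma\widetilde H\,d\mu_\gamma$.

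The one imprecision is your final justification that the smoothing preserves $\widetilde H\geq\kappa_0$. Miao's procedure ends with a global conformal correction, so it is not supported near the interface and only gives boundary mean curvatures $\hat H_\alpha\to\widetilde H$. A genuinely local smoothing would need a strict mean curvature jump rather than the exact matching you have. The paper closes this gap by applying Theorem \ref{fhh}(A) with $\kappa=\kappa_0/2$ for small $\alpha$ and then letting $\alpha\to0$, and you should do the same.
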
 

Although the total mean curvature bounds of Frenck, Hanke, and Hirsch \cite{FHH} and, for spin fill-ins, of B\"ar \cite{B} already imply, by H\"older’s inequality, a uniform positive lower bound for \(\int_\Sigma H^{-p}\,d\mu_\gamma\), our Theorem \ref{finiteness1} gives a quantitative estimate in terms of the invariant \(\eta\). For spin fill-ins, our Corollary \ref{ros} leads to a fully explicit bound depending only on \(n\), \(p\), coarse intrinsic boundary data, and the prescribed scalar curvature lower bound.

We now outline the main ideas of the proofs of \thmref{cobordant1} and \thmref{cobordant2}. A crucial ingredient in both proofs is the construction of a suitable path of metrics by solving an ODE (see Section \ref{3} for details). This choice of metric path allows the quasi-spherical metric equation \eqref{qseq}, after a suitable reparametrization of time, to be transformed exactly into an equation of the form \eqref{fde}. Once this reduction is achieved, Theorem \ref{nm} allows us to convert largeness of the mean curvature in the harmonic mean sense into pointwise largeness. For Theorem \ref{cobordant2}, we use the same metric path and introduce an absorbing reaction term into the quasi-spherical equation. This yields a neck whose terminal mean curvature has a positive lower bound independent of the initial mean curvature.

\medskip

{\it Organization.}
The remainder of this paper is organized as follows. In Section \ref{2}, we prove Theorem \ref{nm}. Section \ref{3} is devoted to the proofs of Theorem \ref{cobordant1}, Theorem \ref{finiteness1} and Corollary \ref{ros}. Finally, we establish Theorem \ref{cobordant2} and Corollary \ref{finiteness2} in Section \ref{4}.

\section{Proof of Theorem \ref{nm}}\label{2}
In this section, we use Nash--Moser iteration to prove \thmref{nm}. We first specify the Sobolev constant used below. Let \((\Sigma^n,\gamma)\) be a closed Riemannian manifold. For \(n\geq3\), we define its Sobolev constant by
\[C_S=\inf\left\{\frac{\int_\Sigma|\nabla f|^2\,d\mu_{\gamma}}{\left(\int_\Sigma |f|^{2\chi}\,d\mu_{\gamma}\right)^{\frac{1}{\chi}}}\ \Bigg|\ f\in W^{1,2}(\Sigma,\gamma)\setminus\{0\}\quad\text{with}\ \, \int_\Sigma f\,d\mu_{\gamma}=0\right\},\]
where $\chi=n/(n-2)$. Let
$$\bar{f}=\frac{1}{V}\int_{\Sigma} f\,d\mu_{\gamma},$$
where $V$ is the volume of $(\Sigma,\gamma)$. Then
\begin{equation*}
\begin{split}
\|f\|_{L^{2\chi}}&\leq\|f-\bar f\|_{L^{2\chi}}+\|\bar f\|_{L^{2\chi}}\\
&=\|f-\bar f\|_{L^{2\chi}}+V^{-\frac{n+2}{2n}}\left|\int_\Sigma f\,d\mu_{\gamma}\right|\\
&\leq \frac{1}{\sqrt{C_S}}\|\nabla f\|_{L^2}+V^{-\frac{1}{n}}\|f\|_{L^2}.
\end{split}
\end{equation*}
Squaring this estimate and using an elementary inequality, we obtain the following Sobolev inequality for every \(f\in W^{1,2}(\Sigma,\gamma)\):
 \begin{equation}\label{sob}
 \frac{C_S}{2}\left(\int_\Sigma |f|^{2\chi}\,d\mu_{\gamma}\right)^{\frac{1}{\chi}}\leq \int_\Sigma |\nabla f|^2\,d\mu_{\gamma}+C_SV^{-\frac{2}{n}}\int_\Sigma f^2\, d\mu_{\gamma}.\end{equation}

When \(n=2\), we introduce \(N=3\) as an “effective dimension” and set \(\chi=N/(N-2)=3\). We then define
\[
C_S
=
\inf_{\substack{f\in W^{1,2}(\Sigma,\gamma)\setminus\{0\}\\
\int_\Sigma f\,d\mu_\gamma=0}}
\frac{\displaystyle\int_\Sigma|\nabla f|^2\,d\mu_\gamma}
{\displaystyle\left(\int_\Sigma|f|^{2\chi}\,d\mu_\gamma\right)^{\frac1{\chi}}}.
\]
The same argument as above yields
\begin{equation}\label{sb2}
\frac{C_S}{2}
\left(\int_\Sigma|f|^{2\chi}\,d\mu_\gamma\right)^{\frac1{\chi}}
\leq
\int_\Sigma|\nabla f|^2\,d\mu_\gamma
+C_S V^{-\frac{2}{N}}\int_\Sigma f^2\,d\mu_\gamma
\end{equation}
for every $f\in W^{1,2}(\Sigma,\gamma)$.

For convenience, we occasionally omit the volume element in integrals. Unless otherwise stated, all integrals are taken with respect to $d\mu_\gamma$. 

\begin{proof}[\bf Proof of Theorem \ref{nm}:] We first consider the case \(n\geq3\). For any $p>\max\{0,1-m\}$, we have
\begin{equation}\label{monotonicity of Lp-m}
\begin{split}
\frac{d}{dt}\int_{\Sigma} u^{p}&=p\int_{\Sigma} u^{p+m-1}\Delta u\\
&=-A(p,m)\int_{\Sigma}\big|\nabla u^{\frac{p+m}{2}}\big|^2\\
&\leq 0 ,
\end{split}
\end{equation}
where 
$$A(p,m)=\frac{4p(p+m-1)}{(p+m)^2}.$$
Hence, setting \(L=\|u_0\|_{L^{p_0}}\), we obtain
\begin{equation}\label{mp}
\norm{u(\cdot,t)}_{L^{p_0}}\leq L\qquad\text{for all}\ \, t\in [0,T].
\end{equation}
Applying the Sobolev inequality \eqref{sob} to \(u^{(p+m)/2}\), we obtain
\begin{equation}\label{fdifi-m}
\begin{split}
\frac{d}{dt}\int_{\Sigma} u^{p}\leq-\frac{1}{2}A(p,m)C_S\left(\,\int_{\Sigma} u^{\chi(p+m)}\right)^{\frac{1}{\chi}}+A(p,m)C_SV^{-\frac{2}{n}}\int_{\Sigma} u^{p+m}.
\end{split}
\end{equation}
For convenience, set
$$C_1=\frac{1}{2}C_S\qquad \text{and} \qquad C_2=C_SV^{-\frac{2}{n}}.$$
Then, \eqref{fdifi-m} can be rewritten as
\begin{equation}\label{1016}
\frac{d}{dt}\int_{\Sigma} u^{p}\leq -A(p,m)C_1\left(\,\int_{\Sigma} u^{\chi(p+m)}\right)^{\frac{1}{\chi}}+A(p,m)C_2\int_{\Sigma} u^{p+m}.
\end{equation}
 By H\"older's inequality and Young's inequality, we have
\begin{equation*}
\begin{split}
\int_{\Sigma} u^{p+m}&\leq\left(\,\int_{\Sigma} u^{p_0}\right)^\frac{2m}{mn+2p_0}\left(\,\int_{\Sigma} u^{p}\right)^\frac{2p_0}{mn+2p_0}\left(\,\int_{\Sigma} u^{\chi(p+m)}\right)^{\frac{m(n-2)}{mn+2p_0}}\\
&\leq\left(L^m\int_{\Sigma} u^{p}\right)^\frac{2p_0}{mn+2p_0}\left(\,\int_{\Sigma} u^{\chi(p+m)}\right)^{\frac{1}{\chi}\cdot\frac{mn}{mn+2p_0}}\\
&\leq\frac{C_1}{2C_2}\left(\,\int_{\Sigma}  u^{\chi(p+m)}\right)^{\frac{1}{\chi}}+\frac{2p_0}{mn+2p_0}\left(\frac{2mn}{mn+2p_0}\right)^\frac{mn}{2p_0}\left(\frac{C_2}{C_1}\right)^\frac{mn}{2p_0}L^m\int_{\Sigma} u^p.
\end{split}
\end{equation*}
Note that $A(p,m)<4$. It follows that 
\begin{equation}\label{fdifi}
\begin{split}
\frac{d}{dt}\int_{\Sigma} u^{p}\leq-\frac{1}{2}A(p,m)C_1\left(\,\int_{\Sigma}u^{\chi(p+m)}\right)^{\frac{1}{\chi}}+C_3L^m\int_{\Sigma} u^{p},
\end{split}
\end{equation}
where
\begin{equation*}
C_3=2^{\frac{2p_0+nm}{p_0}}C_SV^{-\frac{m}{p_0}-\frac{2}{n}}
=C_S\left(\frac{2^n}{V}\right)^{\frac{2p_0+mn}{p_0n}}.
\end{equation*}

Now, for $0\leq a<b<T$, define $\phi$ by
\begin{equation*}
\phi(s)=\left\{
\begin{aligned}
&  \ 0, \quad\quad\quad\ 0\leq s\leq a,\\
& \ \frac{s-a}{b-a},\quad a\leq s \leq b,\\
&\ 1,\quad\quad\quad\ s\geq b.
\end{aligned}
\right.
\end{equation*}
Multiplying \eqref{fdifi} by $\phi$ and integrating by parts in time, we obtain, for $t\in[a,T]$,
\begin{align*}
 &\phi(t)\int_{\Sigma} u^{p}(\cdot,t)+\frac{1}{2}A(p,m)C_1\int^t_a\phi(s)\left(\,\int_\Sigma u^{\chi(p+m)}(\cdot,s)\right)^{\frac{1}{\chi}}\,ds\\ 
 \leq& \int^{t}_{a}\phi'(s)\int_{\Sigma} u^{p}(\cdot,s)\,ds+C_3L^m\int^t_a \phi(s)\int_\Sigma u^{p}(\cdot,s)\,ds.
\end{align*}
Consequently, for $t\in(b,T]$,
\begin{equation}
\sup_{s\in [b,t]}\int_{\Sigma} u^{p}(\cdot,s)\leq \left(\frac{1}{b-a}+C_3L^m\right)\int^{t}_{a}\int_\Sigma u^{p}(\cdot,s)\,ds,
\end{equation}
and
\begin{equation}
\int^{t}_{b}\left(\,\int_\Sigma u^{\chi(p+m)}(\cdot,s)\right)^{\frac{1}{\chi}}\,ds\leq 2C^{-1}_1A(p,m)^{-1}\left(\frac{1}{b-a}+C_3L^m\right)\int^{t}_a\int_\Sigma u^{p}(\cdot,s)\,ds.
\end{equation}
Let 
$$\lambda=2-\chi^{-1}=\frac{n+2}{n}.$$ 
By H\"older's inequality, we obtain
\begin{align*}
\int_{\Sigma} u^{\lambda p+m}(\cdot,s)\leq\left(\,\int_{\Sigma} u^{p}(\cdot,s)\right)^{1-\frac{1}{\chi}}\left(\,\int_\Sigma u^{\chi(p+m)}(\cdot,s)\right)^{\frac{1}{\chi}}.
\end{align*}
Combining this with the preceding estimates gives
\begin{equation*}
\begin{split}
\int^{t}_{b}\int_\Sigma u^{\lambda p+m}(\cdot,s)\,ds&\leq \int^{t}_{b}\left(\,\int_\Sigma u^{p}(\cdot,s)\right)^{1-\frac{1}{\chi}}\left(\,\int_\Sigma u^{\chi (p+m)}(\cdot,s)\right)^{\frac{1}{\chi}}\,ds\\
&\leq\left(\sup_{s\in [b,t]}\,\int_{\Sigma} u^{p}(\cdot,s)\right)^{1-\frac{1}{\chi}}\int^{t}_{b}\left(\,\int_\Sigma u^{\chi (p+m)}(\cdot,s)\right)^{\frac{1}{\chi}}\,ds\\
&\leq 2C^{-1}_1A(p,m)^{-1}\left(\frac{1}{b-a}+C_3L^m\right)^\lambda\left(\int^{t}_{a}\int_\Sigma u^{p}(\cdot,s)\,ds\right)^\lambda.
\end{split}
\end{equation*}
Thus
\begin{equation}\label{iteration1}
\norm{u}_{L^{\lambda p+m}(\Sigma\times [b,t])}\leq\left(2C^{-1}_1A(p,m)^{-1}\right)^\frac{1}{\lambda p+m}\left(\frac{1}{b-a}+C_3L^m\right)^{\frac{\lambda}{\lambda p+m}}\norm{u}^{\frac{\lambda p}{\lambda p+m}}_{L^{p}(\Sigma\times [a,t])}.
\end{equation}

Next, we perform the iteration. Fix \(t\in(0,T]\) and let
\[
q_0=p_0,\quad\  q_{i}=\lambda q_{i-1}+m\quad  \text{for} \ \ i\geq1.
\]
For \(i\geq1\), set 
\[
t_i=\left(1-2^{1-i}\right)t,
\qquad Q_i=\Sigma\times[t_i,t].
\]
Taking $p=q_{i-1}$, $b=t_{i+1}$, and $a=t_{i}$ in \eqref{iteration1}, we obtain
\begin{equation}
\norm{u}^\frac{q_i}{\lambda^{i}}_{L^{q_i}(Q_{i+1})}\leq 2^{\frac{i}{\lambda^{i-1}}}\left(2C^{-1}_1A(q_{i-1},m)^{-1}\right)^\frac{1}{\lambda^{i}}\left(t^{-1}+C_3L^m\right)^\frac{1}{\lambda^{i-1}}\norm{u}^{\frac{q_{i-1}}{\lambda^{i-1}}}_{L^{q_{i-1}}(Q_{i})}.
\end{equation}
By induction, we conclude that
\begin{equation}\label{2049}
\norm{u}^\frac{q_k}{\lambda^{k}}_{L^{q_k}(Q_{k+1})}\leq 2^{\sum^{k}_{i=1}\frac{i}{\lambda^{i-1}}}\prod^k_{i=1}\left(2C^{-1}_1A(q_{i-1},m)^{-1}\right)^\frac{1}{\lambda^{i}}\left(t^{-1}+C_3L^m\right)^{\sum^{k}_{i=1}\frac{1}{\lambda^{i-1}}}\norm{u}^{p_0}_{L^{p_0}(Q_{1})}.
\end{equation}
Note that 
\begin{equation}\label{recurrence}
\frac{q_k}{\lambda^{k}}=p_0+\frac{m}{\lambda-1}-\frac{m}{\lambda^k(\lambda-1)}.
\end{equation}
By letting $k\rightarrow\infty$ in \eqref{2049}, we obtain 
\begin{equation}\label{1117}
\norm{u(\cdot,t)}^{p_0+\frac{m}{\lambda-1}}_{L^{\infty}(\Sigma)}\leq\left(2C^{-1}_1\right)^{\frac{1}{\lambda-1}}2^{\frac{\lambda^2}{(\lambda-1)^2}}\prod_{i=1}^{\infty}A(q_{i-1},m)^{-\frac{1}{\lambda^{i}}}\left(t^{-1}+C_3L^m\right)^{\frac{\lambda}{\lambda-1}}\norm{u}^{p_0}_{L^{p_0}(Q_{1})}.
\end{equation}

It remains to estimate the infinite product
 \[\prod_{i=1}^{\infty}A(q_{i-1},m)^{-\frac{1}{\lambda^{i}}}.\]
From \eqref{recurrence}, we obtain
\[
q_k=\lambda^k p_0+\frac{m(\lambda^k-1)}{\lambda-1}.
\]
In particular, for $i\geq1$,
$$
q_{i-1}\ge \lambda^{i-1}p_0.
$$
We rewrite \(A(p,m)\) as
\[
A(p,m)=\frac{4p}{p+m}\left(1-\frac1{p+m}\right).
\]
Let $S_i=q_{i-1}+m$. Then
\[
S_i\ge \lambda^{i-1}p_0+m.
\]
Set
$$
\Theta=\prod_{i=1}^{\infty}A(q_{i-1},m)^{-\frac{1}{\lambda^{i}}}.
$$
Taking logarithms and using the factorization above gives
$$
\log \Theta
=-\log 4\sum_{i=1}^\infty \frac1{\lambda^i}
-\sum_{i=1}^\infty \frac1{\lambda^i}\log\Bigl(1-\frac{m}{S_i}\Bigr)
-\sum_{i=1}^\infty \frac1{\lambda^i}\log\Bigl(1-\frac{1}{S_i}\Bigr).
$$

Recall the elementary inequality
$$
-\log(1-x)\le \frac{x}{1-\alpha}\qquad\text{for}\ \ 0\le x\le \alpha<1.$$
Note that
$$
\frac{m}{S_i}\le \frac{m}{p_0+m}<1,
\qquad
\frac{1}{S_i}\le \frac{1}{p_0+m}<1.
$$
Therefore,
\begin{equation*}
-\log\Bigl(1-\frac{m}{S_i}\Bigr)\le \frac{m}{S_i\left(1-\frac{m}{p_0+m}\right)}=\frac{m(p_0+m)}{p_0S_i},
\end{equation*}
and
\begin{equation*}
-\log\Bigl(1-\frac{1}{S_i}\Bigr)
\le\frac{1}{S_i\left(1-\frac{1}{p_0+m}\right)}=\frac{p_0+m}{(p_0+m-1)S_i}.
\end{equation*}

Using $S_i\geq \lambda^{i-1}p_0$, we obtain
\begin{equation*}
\sum_{i=1}^\infty \frac{m}{\lambda^iS_i}
\leq\sum_{i=1}^\infty \frac{m}{p_0\lambda^{2i-1}}
= \frac{m}{p_0}\frac{1}{\lambda-\lambda^{-1}},
\end{equation*}
and similarly,
\[
\sum_{i=1}^\infty \frac1{\lambda^iS_i}\leq \frac{1}{p_0} \frac{1}{\lambda-\lambda^{-1}}.
\]
Hence
$$
\Theta \le 4^{-\frac{1}{\lambda-1}}
\exp\Biggl(
\frac{p_0+m}{p_0(\lambda-\lambda^{-1})}
\left(
\frac{m}{p_0}
+\frac{1}{p_0+m-1}
\right)
\Biggr).
$$
Since
$$
\lambda-\lambda^{-1}=\frac{4(n+1)}{n(n+2)},
$$
we conclude that
\begin{equation}\label{estpi}
\begin{split}
\Theta&\leq
2^{-n}
\exp\Biggl(
\frac{n(n+2)(p_0+m)}{4p_0(n+1)}
\left(
\frac{m}{p_0}
+\frac{1}{p_0+m-1}
\right)
\Biggr)\\
&\leq 2^{-n}
\exp\Biggl(
\frac{(n+1)(p_0+m)}{4p_0}
\left(\frac{m}{p_0}+\frac{1}{p_0+m-1}
\right)
\Biggr).
\end{split}
\end{equation}
On the other hand, it follows from \eqref{mp} that
$$\norm{u}^{p_0}_{L^{p_0}(Q_{1})}\leq tL^{p_0}.$$
Substituting this and \eqref{estpi} into \eqref{1117}, we finally obtain
\begin{equation*}
\norm{u(\cdot,t)}_{L^{\infty}}\leq C(n,m,p_0,C_S)t^{-\frac{n}{2p_0+mn}}\left(1+C_S\left(\frac{2^n}{V}\right)^{\frac{2p_0+mn}{p_0n}}L^mt\right)^{\frac{n+2}{2p_0+mn}}L^{\frac{2p_0}{2p_0+mn}},
\end{equation*}
where 
\[C\left(n,m,p_0,C_S\right)=2^{\frac{(n+2)^2}{2(2p_0+mn)}}C_S^{-\frac{n}{2p_0+mn}}\exp\left(
\frac{n+1}{4p_0}
\left(\frac{m}{p_0}+\frac{1}{p_0+m-1}
\right)
\right).\]
This completes the proof for \(n\geq 3\).
  
When $n=2$, the Sobolev inequality \eqref{sb2} allows us to apply the same Nash--Moser iteration, yielding
\begin{equation*}
\norm{u(\cdot,t)}_{L^{\infty}}\leq C(N,m,p_0,C_S)t^{-\frac{N}{2p_0+mN}}\left(1+C_{S}\left(\frac{2^N}{V}\right)^{\frac{2p_0+mN}{p_0N}}L^mt\right)^{\frac{N+2}{2p_0+mN}}L^{\frac{2p_0}{2p_0+mN}},
\end{equation*}
where 
\[C\left(N,m,p_0,C_S\right)=2^{\frac{(N+2)^2}{2(2p_0+mN)}}C_S^{-\frac{N}{2p_0+mN}}\exp\left(
\frac{N+1}{4p_0}
\left(\frac{m}{p_0}+\frac{1}{p_0+m-1}
\right)
\right),\] 
with $N=3$. This completes the proof of Theorem \ref{nm}.
\end{proof}

\begin{rem}
Define the $L^{p_0}$-to-$L^\infty$ smoothing time scale by
\[
t_0=\norm{u_0}_{L^{p_0}}^{-m}.
\]
By Theorem~\ref{nm} and the maximum principle, for all $t\geq t_0$, we have
\[
\norm{u(\cdot,t)}_{L^\infty}
\leq \norm{u(\cdot,t_0)}_{L^\infty}
\leq C_*(n,m,p_0,C_S,V)\norm{u_0}_{L^{p_0}}.
\]
\end{rem}

\section{Proofs of Theorems \ref{cobordant1} and \ref{finiteness1} and Corollary \ref{ros}}\label{3}
To prove \thmref{cobordant1}, we construct the required cobordism using the quasi-spherical metrics introduced by Bartnik \cite{Bartnik1}. The following lemma records the relevant formulas; for detailed calculations, see \cite[Lemma 2.1]{SWW}.

\begin{lem}\label{quassphericalscalar}
Let $\Sigma$ be a closed manifold, and let
$\{\gamma_t\}_{t\in[0,1]}$ be a smooth family of Riemannian
metrics on $\Sigma$. Set $W=\Sigma\times [0,1]$ and $\Sigma_t=\Sigma\times\{t\}$. For a smooth positive function
$u$ on $W$, consider the metrics
\[
\bar g=dt^2+\gamma_t,
\qquad
g=u^2dt^2+\gamma_t.
\]
Denote by $\bar A_t$ and $\bar H_t$ the second fundamental
form and mean curvature of $\Sigma_t$ in $(W,\bar g)$
with respect to the unit normal $-\partial_t$, and by
$A_t$ and $H_t$ the corresponding quantities in $(W,g)$
with respect to the unit normal $-u^{-1}\partial_t$.
Then
\begin{equation}\label{sec2meancurrelation}
A_t=u^{-1}\bar A_t,
\qquad
H_t=u^{-1}\bar H_t,
\end{equation}
and the scalar curvature of $g$ is given by
\[
R_g=u^{-2}R_{\bar g}+\left(1-u^{-2}\right)R_{\gamma_t}+2u^{-3}\bar H_t\partial_tu-2u^{-1}\Delta_{\gamma_t}u.
\]
In particular, if $u$ satisfies the quasi-spherical equation
\begin{equation}\label{quassphericalueq}
\bar H_t\partial_tu
=u^2\Delta_{\gamma_t}u
+\frac12\left(\varphi-R_{\gamma_t}\right)u^3
+\frac12\left(R_{\gamma_t}-R_{\bar g}\right)u
\end{equation}
for some smooth function $\varphi$ on $W$, then $R_g=\varphi$.
\end{lem}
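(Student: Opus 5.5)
The plan is a direct computation of the geometry of $g=u^2dt^2+\gamma_t$ in terms of that of $\bar g=dt^2+\gamma_t$. Both metrics induce $\gamma_t$ on each slice $\Sigma_t$. The unit normals are related by $\nu_g=u^{-1}\nu_{\bar g}$, with $\nu_{\bar g}=-\partial_t$.

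\emph{Second fundamental form.} Since $\partial_t$ is $g$-orthogonal to the slices, the second fundamental form is
\[
A_t=\tfrac12\mathcal L_{\nu}\gamma_t=-\tfrac{1}{2u}\partial_t\gamma_t ,
\]
while $\bar A_t=-\tfrac12\partial_t\gamma_t$. Hence $A_t=u^{-1}\bar A_t$. Taking the $\gamma_t$-trace gives $H_t=u^{-1}\bar H_t$.

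\emph{Scalar curvature.} I will apply the twice-traced Gauss equation, together with the second variation formula along the normal flow, to both metrics. For $g$, with lapse $u$, the standard formula for $\Sigma\times I$ with metric $u^2dt^2+\gamma_t$ reads
\[
R_g=R_{\gamma_t}-2u^{-1}\Delta_{\gamma_t}u-|A_t|^2-H_t^2-2u^{-1}\partial_t H_t ,
\]
with sign conventions for the normal $-u^{-1}\partial_t$ to be fixed carefully. The same formula with $u\equiv1$ gives $R_{\bar g}$ in terms of $R_{\gamma_t}$, $|\bar A_t|^2$, $\bar H_t^2$ and $\partial_t\bar H_t$.

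Now substitute $A_t=u^{-1}\bar A_t$ and $H_t=u^{-1}\bar H_t$. This gives
\[
\partial_tH_t=u^{-1}\partial_t\bar H_t-u^{-2}\bar H_t\partial_t u .
\]
The terms $u^{-2}\bigl(|\bar A_t|^2+\bar H_t^2+2\partial_t\bar H_t\bigr)$ collect into $u^{-2}(R_{\gamma_t}-R_{\bar g})$. The remaining contributions are $2u^{-3}\bar H_t\partial_tu$ and $-2u^{-1}\Delta_{\gamma_t}u$. Altogether this yields
\[
R_g=u^{-2}R_{\bar g}+(1-u^{-2})R_{\gamma_t}+2u^{-3}\bar H_t\partial_tu-2u^{-1}\Delta_{\gamma_t}u .
\]

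\emph{Final claim.} Multiply $R_g=\varphi$ by $u^3/2$ and rearrange. This gives exactly \eqref{quassphericalueq}, and the equivalence runs both ways, so any $u$ solving \eqref{quassphericalueq} has $R_g=\varphi$.

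The only real obstacle is bookkeeping of signs: the orientation $-\partial_t$, and the sign of $\partial_t H$ in the second variation formula. I would fix these by checking the formula on the Euclidean cone $dr^2+r^2\gamma_{\mathbf S^n}$, as in \cite[Lemma 2.1]{SWW}.
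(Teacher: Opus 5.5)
Your proposal is correct and follows the same route as the computation the paper defers to in \cite[Lemma 2.1]{SWW}, which the paper also uses for $R_{\bar g}$ in the proof of Theorem \ref{cobordant1}: combine the twice-traced Gauss equation with the Jacobi (Riccati) equation for $\partial_tH_t$ along the normal flow of speed $u$ for both $g$ and $\bar g$, then substitute $A_t=u^{-1}\bar A_t$, $H_t=u^{-1}\bar H_t$ and $\partial_tH_t=u^{-1}\partial_t\bar H_t-u^{-2}\bar H_t\partial_tu$. The one correction is the sign convention: the paper takes $\bar A_t=+\tfrac12\partial_t\gamma_t$ (so that Euclidean balls have $H>0$), i.e.\ $A_t=-\tfrac12\mathcal L_\nu\gamma_t=\tfrac{1}{2u}\partial_t\gamma_t$; only in this convention does your displayed formula with $-2u^{-1}\partial_tH_t$ hold and produce $+2u^{-3}\bar H_t\partial_tu$, whereas with the convention $A_t=\tfrac12\mathcal L_\nu\gamma_t$ that you wrote, both of these terms change sign.
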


\begin{proof}[Proof of Theorem \ref{cobordant1}:]
In this proof, we use the same notation as in Lemma \ref{quassphericalscalar}. Let \(f\) be a smooth positive function on \([0,1]\), to be chosen later, satisfying \(f(0)=1\). On $W$, define $\bar g=dt^2+f^2(t)\gamma.$
Direct calculations yield
\begin{equation*}
\Delta_{\gamma_t}=f^{-2}\Delta_{\gamma},\qquad R_{\gamma_t}=f^{-2}R_{\gamma},
\end{equation*}
\begin{equation}\label{qsh}
\bar{A}_t=\frac{1}{2}\partial_t\gamma_t=f'f\gamma,\qquad \bar{H}_t=\tr_{\gamma_t}\bar{A}_t=\frac{nf'}{f},
\end{equation}
and 
\begin{align*}
\partial_t\bar H_t=n\left(\frac{f''}{f}-\frac{f'^2}{f^2}\right).
\end{align*}
By the Gauss and Jacobi equations,
\begin{align*}
R_{\bar g}&=R_{\gamma_t}-\left(\bar H_t^2+|\bar A_t|^2\right)-2\partial_t \bar H_t\\
&=R_{\gamma_t}-n(n-1)\frac{f'^2}{f^2}-\frac{2nf''}{f}.
\end{align*}

We prescribe $\varphi=R_{\gamma_t}$, so that \eqref{quassphericalueq} becomes
\begin{equation}\label{qs1}
nf'f\frac{\partial u}{\partial t}=u^2\Delta_{\gamma}u+n\left(\frac{n-1}{2}f'^2+ff''\right)u.
\end{equation}
We choose $f(t)=(1+t)^\frac{2}{n+1}$, which satisfies
\[\frac{n-1}{2}f'^2+ff''=0.\]
Equation \eqref{qs1} then reduces to
$$\frac{2n}{n+1}(1+t)^{\frac{3-n}{n+1}}\frac{\partial u}{\partial t}=u^2\Delta_{\gamma}u.$$
For a smooth positive solution \(u\) of this equation, set
\[g=u^2dt^2+(1+t)^{\frac{4}{n+1}}\gamma.\] 
The scalar curvature of the neck \((W,g)\) then satisfies
\begin{equation*}
\begin{split}
R_{g}&=R_{\gamma_t}=(1+t)^{-\frac{4}{n+1}}R_{\gamma}\\
&\geq \min\left\{\min_\Sigma R_{\gamma},2^{-\frac{4}{n+1}}\min_{\Sigma} R_{\gamma}\right\}.
\end{split}
\end{equation*}
  
Now, introduce the new time variable
\[s(t)=\frac{(n+1)^2}{4n(n-1)}\left((1+t)^\frac{2(n-1)}{n+1}-1\right),\qquad t\in[0,1],\]
and denote its inverse function by $t(s)$. Define
\(\tilde{u}(s)=u(t(s)).\)
Then 
$$\partial_s \tilde{u}=\tilde{u}^2\Delta_\gamma \tilde{u}.$$
We now consider the following initial value problem on \(\Sigma\times[0,s(1)]\):
\begin{equation}\label{qs3}
\left\{
\begin{aligned}
\partial_s \tilde{u}&=\tilde u^2\Delta_{\gamma}\tilde u,\\
\tilde u(\cdot,0)&=\frac{2n}{n+1}H^{-1}.
\end{aligned}
\right.
\end{equation}

Note that 
$$\norm{\tilde u_0}_{L^{p}}=\frac{2n}{n+1}\norm{H}_{L^{-p}}^{-1}.$$
For \(n\geq3\), applying Theorem \ref{nm} with \(m=2\) and \(p_0=p\), we obtain
\begin{equation}\label{830}
\norm{\tilde u(\cdot,s)}_{L^\infty}
\leq C s^{-\frac{n}{2(p+n)}}
\norm{H}_{L^{-p}}^{-\frac{p}{p+n}}
\left(
1+
\frac{4n^2}{(n+1)^2}C_S
\left(\frac{2^n}{V}\right)^{\frac{2(p+n)}{pn}}
\norm{H}_{L^{-p}}^{-2}s
\right)^{\frac{n+2}{2(p+n)}},
\end{equation}
where
\[C=2^{\frac{(n+2)^2}{4(p+n)}}
\left(\frac{2n}{n+1}\right)^{\frac{p}{p+n}}\exp\left(
\frac{(n+1)(3p+2)}{4p^2(p+1)}
\right)C_S^{-\frac{n}{2(p+n)}}.\]
By \eqref{qsh}, we have
\begin{equation}\label{829}
H_t=\frac{2n}{(n+1)(t+1)u(\cdot,t)}=\frac{2n}{(n+1)(t+1)\tilde{u}(\cdot,s(t))}.
\end{equation}
Setting $t=1$ and substituting \eqref{830} into \eqref{829} yields
\[
H_1\geq \frac{ns(1)^{\frac{n}{2(p+n)}}\norm{H}_{L^{-p}}^{\frac{p}{p+n}}}{(n+1)C\left(
1+
\frac{4n^2}{(n+1)^2}C_S
\left(\frac{2^n}{V}\right)^{\frac{2(p+n)}{pn}}
\norm{H}_{L^{-p}}^{-2}s(1)
\right)^{\frac{n+2}{2(p+n)}}}.
\]

The right-hand side is a strictly increasing function of
$\norm{H}_{L^{-p}}$ and tends to infinity as
$\norm{H}_{L^{-p}}$ tends to infinity. Thus, there exists a positive
constant $C(n,p,\kappa,C_S,V)$, increasing in $\kappa$,
such that when $\norm{H}_{L^{-p}}\geq C(n,p,\kappa,C_S,V)$,
$ H_1\geq\kappa$.

 When \(n=2\), the same conclusion follows by applying the two-dimensional estimate in Theorem 1.4 with effective dimension \(N=3\).

Note that the induced metric at $t=1$ is
$2^{4/(n+1)}\gamma$. Setting $\widetilde H=H_1$,
we obtain the desired cobordism from $(\gamma,-H)$
to $(2^{4/(n+1)}\gamma,\widetilde H)$.

\end{proof} 

We now turn to the proof of Theorem \ref{finiteness1}.  
\begin{proof}[Proof of Theorem \ref{finiteness1}:] We use the same notation as in the proof of Theorem \ref{cobordant1}. 
Suppose that $(\Omega,g_0)$ is a fill-in of $(\Sigma,\gamma,H)$ with
$R_{g_0}\geq\sigma$, where $\sigma\leq0$ and $H>0$. Fix $T>0$. On $\Sigma\times[0,T]$, consider the metric
\[
g=u^2dt^2+(1+t)^{\frac4{n+1}}\gamma,
\]
constructed as above by prescribing
$R_g=\varphi=R_{\gamma_t}$. Then, 
\begin{equation*}
\begin{split}
R_{g}
&=(1+t)^{-\frac{4}{n+1}}R_{\gamma}\\
&\geq \min\left\{\min_{\Sigma} R_{\gamma},(1+T)^{-\frac{4}{n+1}}\min_{\Sigma} R_{\gamma}\right\}.
\end{split}
\end{equation*}

Glue $(\Omega,g_0)$ to $(\Sigma\times[0,T],g)$ along their
common boundary $(\Sigma,\gamma)$, and denote the resulting
manifold by $(\hat\Omega,\hat g)$. This gives a fill-in of
the Bartnik data  $\left(\Sigma,(1+T)^{4/(n+1)}\gamma,H_T\right)$ with corners along $(\Sigma,\gamma)$. Away from the corners, $\hat g$ is smooth and its scalar curvature is bounded below by $\min\{\sigma,\min_{\Sigma} R_{\gamma}\}$.  Along the corners, both the induced metrics and the mean curvatures from the two sides match. Then after performing the mollifying procedure in \cite{Miao02} or \cite{BH23} and a suitable conformal deformation that preserves the boundary metric, we obtain a family of
smooth metrics $\{\hat g_\delta\}_{0<\delta\leq\delta_0}$
on $\hat\Omega$ satisfying
\[
\hat g_\delta|_{\partial\hat\Omega}
=(1+T)^{\frac{4}{n+1}}\gamma,
\qquad
R_{\hat g_\delta}\geq\min\big\{\sigma,\min_{\Sigma} R_{\gamma}\big\}.
\]
Moreover, $\hat g_\delta\to\hat g$ locally uniformly away from the corners in the $C^{1,\,\alpha}$-sense. So the induced mean curvature of $\hat g_\delta$ on $\partial\hat\Omega$, denoted by $\hat H_\delta$, converges to $H_T$ as $\delta\rightarrow 0$ in the $C^\alpha$-sense. Clearly, $(\hat\Omega,\hat g_\delta)$ is a fill-in of $(\Sigma,(1+T)^{4/(n+1)}\gamma,\hat H_\delta)$ with $R_{\hat g_\delta}\geq \min\{\sigma,\min_{\Sigma} R_{\gamma}\}$. By definition, 
\begin{equation*}
\min_{\Sigma}\hat H_\delta\leq \eta\left(\Sigma,(1+T)^{\frac{4}{n+1}}\gamma,\min\left\{\sigma,\min_\Sigma R_{\gamma}\right\}\right).
\end{equation*}
Letting $\delta\rightarrow 0$, we conclude that
\begin{equation}\label{973}
\min_{\Sigma}H_{T}\leq \eta\left(\Sigma,(1+T)^{\frac{4}{n+1}}\gamma,\min\left\{\sigma,\min_\Sigma R_{\gamma}\right\}\right).
\end{equation}

We first consider the case $n\geq3$. For simplicity, let
\[
b=\frac{n+2}{2(p+n)},
\qquad D=
\frac{4n^2}{(n+1)^2}C_S
\left(\frac{2^n}{V}\right)^{\frac{2(p+n)}{pn}}.\]
Then \eqref{830} can be rewritten as 
\[
\norm{\tilde u(\cdot,s)}_{L^\infty}\leq
C s^{-\frac{n}{2(p+n)}}
\norm{H}_{L^{-p}}^{-\frac{p}{p+n}}
\left(1+sD\norm{H}_{L^{-p}}^{-2}\right)^b,
\]
where
\[
s(t)=\frac{(n+1)^2}{4n(n-1)}
\left((1+t)^{\frac{2(n-1)}{n+1}}-1\right),
\]
and 
\[C=2^{\frac{(n+2)^2}{4(p+n)}}
\exp\left(
\frac{(n+1)(3p+2)}{4p^2(p+1)}
\right)
\left(\frac{2n}{n+1}\right)^{\frac{p}{p+n}}C_S^{-\frac{n}{2(p+n)}}.\]
Combining \eqref{830}, \eqref{829}, and \eqref{973} yields
\begin{equation}\label{8311}
\frac{2ns(T)^{\frac{n}{2(p+n)}}\norm{H}_{L^{-p}}^{\frac{p}{p+n}}}{C(n+1)(1+T) \left(1+D s(T)\norm{H}_{L^{-p}}^{-2}\right)^b}\leq \eta\left(\Sigma,(1+T)^{\frac4{n+1}}\gamma,\min\left\{\sigma,\min_{\Sigma} R_{\gamma}\right\}\right).
\end{equation}

Now, choose $T$ such that $s(T)=1$, and let
\[
P=1+\frac{4n(n-1)}{(n+1)^2}.
\] 
Then 
\begin{equation}\label{eq:t0}
T=P^{\frac{n+1}{2(n-1)}}-1\qquad\text{and}\qquad (1+T)^{\frac4{n+1}}=P^{\frac{2}{n-1}}.\end{equation}
Set
\[B=\frac{1}{n}\eta\left(\Sigma,(1+T)^{\frac4{n+1}}\gamma,\min\big\{\sigma,\min_{\Sigma} R_{\gamma}\big\}\right).\]
Then \eqref{8311} becomes  
\[
\frac{2}{C(n+1)P^{\frac{n+1}{2(n-1)}}}
\norm{H}_{L^{-p}}^{\frac{p}{p+n}}
\left(1+D\norm{H}_{L^{-p}}^{-2}\right)^{-b}
\leq B,
\]
or equivalently, 
\begin{equation}\label{eq:basic-h}
\norm{H}_{L^{-p}}^{\frac{p}{p+n}}
\left(1+D\norm{H}_{L^{-p}}^{-2}\right)^{-b}
\leq R,
\end{equation}
where
\begin{equation}\label{eqR}
R=\frac{n+1}{2}
CBP^{\frac{n+1}{2(n-1)}}.
\end{equation}

\noindent{\bf Case 1: $D\norm{H}_{L^{-p}}^{-2}\leq 1$.}
In this case,
\[
1+D\norm{H}_{L^{-p}}^{-2}\leq 2,
\]
and hence
\[
\left(1+D\norm{H}_{L^{-p}}^{-2}\right)^{-b}\geq 2^{-b}.
\]
It follows from \eqref{eq:basic-h} that
\[
2^{-b}\norm{H}_{L^{-p}}^{\frac{p}{p+n}}\leq R.
\]
Consequently,
\begin{equation}\label{eq:h-case1}
\norm{H}_{L^{-p}}\leq \left(2^bR\right)^{\frac{p+n}{p}}.
\end{equation}

\noindent{\bf Case 2: $D\norm{H}_{L^{-p}}^{-2}\geq 1$.} In this case,
\[
1+D\norm{H}_{L^{-p}}^{-2}\leq 2D\norm{H}_{L^{-p}}^{-2}.
\]
Therefore,
\begin{align*}
\norm{H}_{L^{-p}}^{\frac{p}{p+n}}
\left(1+D\norm{H}_{L^{-p}}^{-2}\right)^{-b}
&\geq
\norm{H}_{L^{-p}}^{\frac{p}{p+n}}
\left(2D\norm{H}_{L^{-p}}^{-2}\right)^{-b}
\\
&=(2D)^{-b}\norm{H}_{L^{-p}}^{\frac{p+n+2}{p+n}}.
\end{align*}
Using \eqref{eq:basic-h}, we conclude that
\begin{equation}\label{eq:h-case2}
\norm{H}_{L^{-p}}\leq
\left((2D)^bR\right)^{\frac{p+n}{p+n+2}}.
\end{equation}
Combining \eqref{eq:h-case1} and \eqref{eq:h-case2} yields
\begin{equation}\label{eq:h-compact}
\norm{H}_{L^{-p}}\leq
\max\left\{
\left(2^bR\right)^{\frac{p+n}{p}},
\left((2D)^bR\right)^{\frac{p+n}{p+n+2}}
\right\}.
\end{equation}
Substituting \eqref{eqR} into \eqref{eq:h-compact} and noting that
\begin{align*}
C\leq
2^{\frac{(n+2)^2+4p}{4(p+n)}}C_S^{-\frac{n}{2(p+n)}}\exp\left(
\frac{(n+1)(3p+2)}{4p^2(p+1)}
\right),
\end{align*}
and
\begin{align*}
D\leq 4C_S
\left(\frac{2^n}{V}\right)^{\frac{2(p+n)}{pn}},
\end{align*}
we conclude that 
\begin{equation*}\label{eq:h-explicit}
\begin{aligned}
\norm{H}_{L^{-p}}
\leq &\,\max\Bigg\{
\left(2^{\frac{n^2+2n+8}{4}}\left((n+1)P^{\frac{n+1}{2(n-1)}}\right)^{n+p}\exp\left(
\frac{(n+1)(3p+2)(p+n)}{4p^2(p+1)}
\right)C_S^{-\frac{n}{2}}B^{p+n}\right)^{\frac{1}{p}},
\\
&
\left(2^{\frac{n^2+6n+16}{4}}\left((n+1)P^{\frac{n+1}{2(n-1)}}\right)^{n+p}
\exp\left(
\frac{(n+1)(3p+2)(p+n)}{4p^2(p+1)}
\right)C_S\left(\frac{2^n}{V}\right)^{
\frac{(n+2)(p+n)}{pn}}B^{p+n}\right)^{\frac{1}{p+n+2}}
\Bigg\}.
\end{aligned}
\end{equation*}
Using $P^{(n+1)/(2(n-1))}<3$ and $(3p+2)/(p+1)<3$, we obtain
\begin{equation*}
\begin{aligned}
\norm{H}_{L^{-p}}
\leq &\max\Bigg\{
\left(2^{\frac{(n+2)^2}{4}}\left(3(n+1)\right)^{n+p}
C_S^{-\frac{n}{2}}\exp\left(
\frac{3(n+1)(p+n)}{4p^2}
\right)B^{p+n}\right)^{\frac{1}{p}},
\\
&
\left(2^{\frac{(n+4)^2}{4}}\left(3(n+1)\right)^{n+p}
C_S\exp\left(
\frac{3(n+1)(p+n)}{4p^2}
\right)\left(\frac{2^n}{V}\right)^{
\frac{(n+2)(p+n)}{pn}}B^{p+n}\right)^{\frac{1}{p+n+2}}
\Bigg\}\\
\leq&\,\bar{C}(n,p)\max\Bigg\{\left(C_S^{-\frac{n}{2}}B^{p+n}\right)^{\frac{1}{p}}, \left(C_{S}\left(\frac{2^n}{V}\right)^{
\frac{(n+2)(p+n)}{pn}}B^{p+n}\right)^{\frac{1}{p+n+2}}
\Bigg\},
\end{aligned}
\end{equation*}
where 
\[\bar{C}(n,p)=2^{\frac{(n+4)^2}{4p}}\left(3(n+1)\right)^{\frac{n+p}{p}}\exp\left(\frac{3(n+1)(p+n)}{4p^3}\right).\]
Hence, 
\begin{equation*}
\int_{\Sigma}\frac{1}{H^{p}} \,d\mu_{\gamma}\geq C(n,p)\min\Bigg\{\left(C_S^{\frac{n}{2}}B^{-(p+n)}\right),\left(C_{S}\left(\frac{2^n}{V}\right)^{
\frac{(n+2)(n+p)}{pn}}B^{p+n}\right)^{-\frac{p}{p+n+2}}
\Bigg\},
\end{equation*}
where
\begin{equation}\label{99}
C(n,p)=2^{-\frac{(n+4)^2}{4}}\bigl(3(n+1)\bigr)^{-(n+p)}\exp\left(-\frac{3(n+1)(p+n)}{4p^2}\right)
\end{equation}

Note that by scaling, 
\begin{equation}\label{98}
\begin{split}
B&=\frac{1}{n}\eta\left(\Sigma,(1+T)^{\frac4{n+1}}\gamma,\min\left\{\sigma,\min_\Sigma R_{\gamma}\right\}\right)\\
&=\frac{1}{n}P^{-\frac1{n-1}}\eta\left(\Sigma,\gamma,P^{\frac2{n-1}}\min\left\{\sigma,\min_\Sigma R_{\gamma}\right\}\right).\end{split}
\end{equation}
By letting 
\begin{equation}\label{910}\tau(n)=\frac{1}{n}P^{-\frac1{n-1}}=\frac{1}{n}\left(1+\frac{4n(n-1)}{(n+1)^2}\right)^{-\frac1{n-1}},\end{equation}
we complete the proof when $n\geq3$. 

When $n=2$, by Theorem \ref{nm}, the same bound holds with $n$ replaced by $3$ in the estimate and in $C(n,p)$. The definition of $B$ retains the actual dimension $n=2$. 

We complete the proof of Theorem \ref{finiteness1}.

\end{proof}

\begin{proof}[Proof of Corollary \ref{ros}:] 
For convenience, we consider the following invariant for spin fill-ins. For \(\sigma\in\mathbb R\), define 
\[
\eta^{\mathrm{spin}}(\Sigma,\gamma,\sigma)=\sup\left\{\min_{\Sigma} H\,\big|\,(\Sigma,\gamma,H)\text{ admits a spin fill-in with scalar curvature $R\geq\sigma$}\right\}.
\]
 Gromov \cite{Gro18} proved that 
\begin{equation*}
\eta^{\mathrm{spin}}(\Sigma,\gamma,0)
\leq
\frac{n}{\mathrm{Rad}(\Sigma,\gamma)}.
\end{equation*}
More recently, combining B\"ar's eigenvalue estimate \cite{B0}
with the Hijazi--Montiel--Rold\'an inequality \cite{HMR},
Brendle, Tsiamis, and Wang \cite{BTW} showed that
\begin{equation*}
\eta^{\mathrm{spin}}(\Sigma,\gamma,-n(n+1))\leq
n\left(1+\mathrm{Rad}\left(\Sigma,\gamma\right)^{-2}\right)^{\frac{1}{2}}.
\end{equation*}
By scaling, these estimates imply that
\begin{align}\label{sca}
\begin{split}
&\eta^{\mathrm{spin}}\left(\Sigma,\left(1+\frac{4n(n-1)}{(n+1)^2}\right)^{\frac{2}{n-1}}
\gamma,\min\left\{\sigma,\min_{\Sigma}R_{\gamma}\right\}\right)\\
\leq&\, n\left(\frac{-\min\left\{\sigma, \min_{\Sigma}R_{\gamma}\right\}}{n(n+1)}+\mathrm{Rad}\left(\Sigma,\left(1+\frac{4n(n-1)}{(n+1)^2}\right)^{\frac{2}{n-1}}\gamma\right)^{-2}\right)^{\frac{1}{2}}\\
=&\,n\left(\frac{-\min\left\{\sigma, \min_{\Sigma}R_{\gamma}\right\}}{n(n+1)}+\left(1+\frac{4n(n-1)}{(n+1)^2}\right)^{-\frac{2}{n-1}}\mathrm{Rad}\left(\Sigma,\gamma\right)^{-2}\right)^{\frac{1}{2}}\\
\leq&\, n\left(\frac{-\min\left\{\sigma, \min_{\Sigma}R_{\gamma}\right\}}{n(n+1)}+\mathrm{Rad}\left(\Sigma,\gamma\right)^{-2}\right)^{\frac{1}{2}}.
\end{split}
\end{align}
Since attaching the product collar preserves the spin
structure, the proof of Theorem \ref{finiteness1}
applies with $\eta$ replaced by $\eta^{\mathrm{spin}}$.
Thus, the resulting integral estimate holds with
\[
B=\frac1n\eta^{\mathrm{spin}}\left(\Sigma,\left(1+\frac{4n(n-1)}{(n+1)^2}\right)^{\frac{2}{n-1}}
\gamma,\min\left\{\sigma,\min_{\Sigma}R_{\gamma}\right\}\right).
\]
Using \eqref{sca}, together with the
monotonicity of the right-hand side of that estimate
with respect to $B$, yields the desired conclusion.

\end{proof}
 
\section{Proofs of Theorem \ref{cobordant2} and Corollary \ref{finiteness2}}\label{4}

In this section, we prove Theorem \ref{cobordant2} and Corollary \ref{finiteness2}. Using the homothetic path constructed in the previous section, we prescribe the scalar curvature so that the corresponding quasi-spherical equation contains an absorbing cubic term. This yields a positive lower bound for the terminal mean curvature that is independent of the initial mean curvature.
\begin{proof}[Proof of Theorem \ref{cobordant2}:]
As in the proof of Theorem \ref{cobordant1}, on \(\Sigma\times[0,1]\), set
\[
 f(t)=(1+t)^{\frac{2}{n+1}},
 \qquad
 \gamma_t=f(t)^2\gamma,
\]
and consider the metric
\[
 \overline g=dt^2+\gamma_t.
\]
Then, 
\[
 \Delta_{\gamma_t}=f^{-2}\Delta_\gamma,
 \qquad
 R_{\gamma_t}=f^{-2}R_\gamma,
 \qquad
 \overline H_t=\frac{nf'}{f}
 =\frac{2n}{(n+1)(1+t)},
\]
and our choice of \(f\) gives
\[
 R_{\overline g}=R_{\gamma_t}=f^{-2}R_\gamma.
\]

For simplicity, let
\[\lambda=\frac{1}{2}\left(\min_{\Sigma} R_{\gamma}-\delta\right),\]
and let
\begin{equation*}
\varphi=f^{-2}\big(R_\gamma-2\lambda\big).
\end{equation*}
It is easy to see that 
\[\varphi\geq \min\left\{\delta, 2^{-\frac{4}{n+1}}\delta\right\}.\]
The quasi-spherical equation \eqref{quassphericalueq} for \(R_{g}=\varphi\) reduces to
\begin{equation}\label{qseqn}
\left\{
\begin{aligned}
nf f'\frac{\partial u}{\partial t}
 &=u^2\Delta_\gamma u-\lambda u^3,\\
 u(\cdot,0)&=\frac{2n}{n+1}H^{-1}.
\end{aligned}
\right.
\end{equation}
As in the proof of Theorem \ref{cobordant1}, define
\begin{equation}\label{963}
 s(t)=\frac{(n+1)^2}{4n(n-1)}
 \left[(1+t)^{\frac{2(n-1)}{n+1}}-1\right].
\end{equation}
Writing \(u(s)=u(t(s))\) for simplicity, \eqref{qseqn} becomes
\begin{equation}\label{qseqn1}
 \frac{\partial u}{\partial s}
 =u^2\Delta_\gamma u-\lambda u^3.
\end{equation}
Let
\[
 M(s)=\max_\Sigma u(\cdot,s),
 \qquad
 m(s)=\min_\Sigma u(\cdot,s).
\]
Similar to the proof of \cite[Lemma 2.1]{SWW}, by comparison with the ODE
\(v'=-\lambda v^3\) and using the maximum principle, we obtain 

\begin{equation}\label{ma}
 M(s)\leq
 \big(M(0)^{-2}+2\lambda s\big)^{-\frac12}
 \leq(2\lambda s)^{-\frac12}
\end{equation}
for \(s>0\), while
\[
 m(s)\geq
 \big(m(0)^{-2}+2\lambda s\big)^{-\frac12}>0.
\]
These estimates show that the positive solution of
\eqref{qseqn1} exists throughout the required time
interval.

Now, consider a quasi-spherical metric \(
 g=u^2dt^2+\gamma_t\). 
Denote by $H_t$ the mean curvature of the slice
$\Sigma\times\{t\}$ in $(\Sigma\times[0,1],g)$ with respect
to the unit normal $u^{-1}\partial_t$.
By \eqref{sec2meancurrelation}, we have $H_0=H$.
At $t=1$, combining \eqref{qseqn1} and \eqref{ma} yields
\begin{align*}
 H_1
 &\geq \frac{n}{(n+1)M(s(1))}\\
 &\geq \frac{n}{n+1}\sqrt{2\lambda s(1)}\\
  &=\left[
 \frac{n}{4(n-1)}
 \left(2^{\frac{2(n-1)}{n+1}}-1\right)\left(\min_{\Sigma} R_{\gamma}-\delta\right) \right]^{\frac12}.
\end{align*}

We next compare the total mean curvatures.  Let
\[
 J(s)=\int_\Sigma u(\cdot,s)^{-1}\,d\mu_\gamma.
\]
It follows from \eqref{qseqn1} that
\begin{align*}
 J'(s)
 &=-\int_\Sigma u^{-2}\frac{\partial u}{\partial s}
 \,d\mu_\gamma\\
 &=-\int_\Sigma\Delta_\gamma u\,d\mu_\gamma
 +\lambda\int_\Sigma u\,d\mu_\gamma\\
 &=\lambda\int_\Sigma u\,d\mu_\gamma\geq0.
\end{align*}
Since \(d\mu_{\gamma_t}=f(t)^n d\mu_\gamma\), we have
\begin{equation}\label{H1}
 \int_\Sigma H_t\,d\mu_{\gamma_t}
 =\frac{2n}{n+1}(1+t)^{\frac{n-1}{n+1}}J(s(t)).
\end{equation}
Combining the monotonicity of \(J\)
 and
\eqref{H1}, we obtain
\begin{equation*}
\begin{split}
 \int_\Sigma H\,d\mu_\gamma
 \leq&
 2^{-\frac{n-1}{n+1}}
 \int_\Sigma H_1\,d\mu_{\gamma_1}\\
 =&2\int_\Sigma H_1\,d\mu_{\gamma}.
 \end{split}
\end{equation*}
By letting $\widetilde H=H_1$, we complete the proof of Theorem \ref{cobordant2}.
\end{proof}
Now, we are ready to provide the proof of Corollary \ref{finiteness2}.

\begin{proof}[Proof of Corollary \ref{finiteness2}:]
Assume $(\Omega^{n+1},g_0)$ is a fill-in of Bartnik data $(\Sigma^{n},\gamma,H)$ with scalar curvature $R_{g_0}\geq \sigma$ and $H\geq0$. We first deform the metric so that the boundary mean curvature
becomes strictly positive as that in \cite{FHH}. By the boundary deformation argument
in \cite[Proposition 3.8]{BH23}, for any sufficiently small
$\epsilon\in(0,1)$, we may modify $g_0$ in an arbitrarily small
neighborhood of $\partial\Omega$ to obtain a metric $g_\epsilon$
satisfying
\[
g_\epsilon|_{\partial\Omega}=\gamma,\qquad
H_\epsilon>H\geq0,\qquad
R_{g_\epsilon}\geq\sigma-\epsilon\geq\sigma-1.
\]
In particular,
\begin{equation}\label{912}
\int_\Sigma H\,d\mu_\gamma
\leq
\int_\Sigma H_\epsilon\,d\mu_\gamma.
\end{equation}
By choosing $\delta=\min_{\Sigma} R_{\gamma}-1$ in Theorem \ref{cobordant2}, we know there exists a cobordism $g$ from $\left(\gamma,-H_{\epsilon}\right)$
to $\left(2^{4/(n+1)}\gamma,\widetilde H\right)$ with  
\begin{equation}\label{H2}
 R_g\geq\min\left\{\delta, 2^{-\frac{4}{n+1}}\delta\right\},\qquad \widetilde H\geq 
 \left[
 \frac{n}{4(n-1)}
 \left(2^{\frac{2(n-1)}{n+1}}-1\right)
 \right]^{\frac12}>0,
\end{equation}
and
\begin{equation}\label{91}
 \int_\Sigma H_{\epsilon}\,d\mu_\gamma
 \leq
 2^{-\frac{n-1}{n+1}}
 \int_\Sigma \widetilde H\,d\mu_{2^{4/(n+1)}\gamma}.
 \end{equation} 
 
 Now, by gluing $(\Omega,g_{\epsilon})$ with this cobordism along the common boundary $(\Sigma,\gamma)$, we obtain a fill-in of $\left(\Sigma,2^{4/(n+1)}\gamma,\widetilde H\right)$ with corner along $(\Sigma,\gamma)$ and with scalar curvature not less than 
 \[\min\left\{\sigma-1, \delta, 2^{-\frac{4}{n+1}}\delta\right\},\]
 away from the corner. By performing the same mollifying procedure as in the proof of Theorem \ref{finiteness1}, we may find $\alpha_0>0$ such that,
for every $0<\alpha\leq\alpha_0$, there exists a smooth
fill-in $(\hat\Omega,\hat g_\alpha)$ of
$(\Sigma,2^{4/(n+1)}\gamma,\hat H_\alpha)$ satisfying\[
R_{\hat g_\alpha}\geq
\min\left\{\sigma-1,\delta,2^{-\frac{4}{n+1}}\delta\right\},
\qquad
\hat H_\alpha\rightarrow\widetilde H
\quad\text{as }\alpha\to0.
\]
Choosing $\alpha_0$ sufficiently small, we may assume that, for every $0<\alpha\leq\alpha_0$,
\begin{equation}\label{H2}
\hat H_\alpha\geq
\frac12\left[
\frac{n}{4(n-1)}
\left(2^{\frac{2(n-1)}{n+1}}-1\right)
\right]^{\frac12}>0.
\end{equation}
It then follows from \cite[Theorem A]{FHH} that
\begin{equation*}
\int_\Sigma \hat H_\alpha\,d\mu_{2^{\frac{4}{n+1}}\gamma}
\leq C\Bigg(
\Sigma,2^{\frac{4}{n+1}}\gamma,
\min\left\{\sigma-1,\delta,2^{-\frac{4}{n+1}}\delta\right\},
\frac12\left[
\frac{n}{4(n-1)}
\left(2^{\frac{2(n-1)}{n+1}}-1\right)
\right]^{\frac12}
\Bigg).
\end{equation*}
The right-hand side depends only on $\Sigma$, $\gamma$ and $\sigma$. By
letting $\alpha\to0$, we obtain
\[
\int_\Sigma \widetilde H\,d\mu_{2^{\frac{4}{n+1}}\gamma}
\leq C(\Sigma,\gamma,\sigma).
\]
Substituting this estimate into \eqref{91} and using
\eqref{912}, we conclude that
\[
\int_\Sigma H\,d\mu_\gamma
\leq C(\Sigma,\gamma,\sigma).
\]
This completes the proof of Corollary \ref{finiteness2}.\end{proof}

\medskip

\subsection*{Acknowledgement}
The authors asked Jingang Xiong whether the  \(L^{p}\)-to-\(L^\infty\) estimate for \(\eqref{maineq}\) could be strengthened to an \(L^{-1}\)-to-\(L^\infty\) estimate. He provided a counterexample showing that such a strengthening is impossible. The authors are grateful to him for this observation and for other stimulating discussions.

\subsection*{Development of the Results and the Use of Generative AI}
The authors began this project in 2022 and obtained preliminary versions of Theorem \ref{cobordant1}, \ref{nm} and \ref{finiteness1} later that year. They subsequently sought to strengthen the \(L^{p}\)-to-\(L^\infty\) estimate to an \(L^{-1}\)-to-\(L^\infty\) estimate for \(\eqref{maineq}\), with the aim of proving the \(H\geq0\) case of Conjecture \ref{conj}. However, Jingang Xiong’s counterexample ruled out such a strengthening. Recently, the authors realized that Theorem \ref{cobordant2} can be used to reduce the \(H\geq0\) case of the conjecture to part (A) of Theorem \ref{fhh}, due to Frenck, Hanke, and Hirsch \cite{FHH}.

All mathematical ideas, arguments, and results presented in this manuscript were developed independently by the authors, without the use of generative AI. Following completion of the initial draft, AI tools were used only to improve the language. The authors take full responsibility for the content of the manuscript.

\end{document}